\newtheorem{main thm}{\quad Theorem}
\newtheorem{df}{\quad Definition}[section]
\newtheorem{thm}[df]{\quad Theorem}
\newtheorem{lem}[df]{\quad Lemma}
\newtheorem{prop}[df]{\quad Proposition}
\newtheorem{rem}[df]{\quad Remark}
\newcommand{\N}{\mathbb{N}}
\newcommand{\Z}{\mathbb{Z}}
\newcommand{\Q}{\mathbb{Q}}
\newcommand{\F}{\mathbb{F}}
\newcommand{\x}{\times}
\newcommand{\up}{\stackrel}
\newcommand{\ds}{\displaystyle}
\DeclareMathOperator*{\zetaprod}{{\displaystyle{\prod\kern-1.45em\coprod}}}
\begin{document}
\begin{center}
{\Large {On conditions for $\rho$-value is 1 or not of complete family of pairing-friendly elliptic curves}
}\\
\medskip{Keiji OKANO}
%\footnote[0]{2000 \textit{Mathematics Subject Classification}. Primary 11R23, Secondary 11R37.} \\
\footnote[0]{\textit{Key Words}. Pairing-friendly elliptic curves, Pairing-based cryptography, Embedding degree.} \\
\end{center}

%%%%%%%%%%%%%%%%%%%%%%%%%%%%%%%%%%%%%%%%%%%%%%%%%%%%%%%%%%

\begin{abstract}
We study whether a complete family of pairing friendly elliptic curves has a $\rho$-value $1$ or not.
We show that, in some cases, $\rho$-values are not to be $1$.
%For some of them, we compute the $\rho$-values.
\end{abstract}

%%%%%%%%%%%%%%%%%%%%%%%%%%%%%%%%%%%%%%%%%%%%%%%%%%%%%%%%%%

\section{Introduction}
The security of public-key cryptosystems is based on mathematical problems 
which seem not to have any efficiently computable solutions.
For example, RSA cryptosystems are based on the fact that 
we do not have any effective way of prime factor decomposition for large numbers.
Also, there are other cryptosystems which rely on the problem called the discrete logarithm problem (DLP).
If elements $a, b $ of a finite cyclic group with large order are given,
then the DLP is to give the solution $x$ of $a^x=b$ if it exists.
Computing the solution of this problem is apparently difficult.
Standard elliptic curve cryptosystems and cryptographic schemes which are based on pairings of elliptic curves depend their security on the DLP.
The standard elliptic curve cryptosystems are proposed by Koblitz \cite{Kob} and is also called elliptic ElGamal cryptosystems.
In the sense of security,
the cryptosystems using elliptic curves are much safer and have many tools for encryption
than using DLP on finite fields.
Moreover, they can be applied with small bit sizes.
Therefore, they are well studied.

In recent years, pairing-based cryptographic schemes have suggested.
They fit many new and novel protocols including ID-based encryption and one-round three-way key change.
The cryptographic schemes based on pairings of elliptic curves which we reference in \S \ref{ps-friendly} in this paper were suggested by Boneh-Franklin \cite{BF} and Sakai-Ohgishi-Kasahara \cite{SOK00}.
One of the features of them is that they require so-called pairing-friendly elliptic curves which have special properties,
whereas elliptic ElGamal cryptosystems can be implemented by using almost randomly generated elliptic curves.
Many strategies of constructing pairing-friendly elliptic curves have been proposed.
We define a parameter $\rho$ that represents how the given curves are close to the ideal in pairing-based cryptographic schemes.
This parameter expresses the ratio of $\log q$ and $\log r$ 
where $q$ is the order of the definition field of a given curve $E$ and 
$r$ is the prime order subgroup of the curve:
$\rho(E)=\log q/\log r$.
If $\rho(E)=1$, 
then the curve is ideal.
However, it is known that such case and the cases where $\rho$ is close to $1$ are very rare.

In this paper, after introducing the method constructing a family of pairing-friendly elliptic curves given by Brezing and Weng, 
we study a parameter $\rho(t(x), r(x), q(x))$ defined for such a family (Definition \ref{family of p-f cv}) which is different from the $\rho$-value above.
The case where the $\rho$-value equals $1$ is also ideal while most of the cases are not.
Only one example of $\rho(t(x), r(x), q(x))=1$ which is constructed by Barreto-Naehrig \cite{BN} is known (Remark \ref{BN}).
We propose a mathematical problem ``to give conditions that the $\rho$-value equals or is close to $1$''.
We denote the Euler function and the $k$th cyclotomic polynomial by $\varphi(x)$ and $\Phi_k(x)$, respectively.
Then our main theorems give many sufficient conditions of that $\rho$-values are not $1$ if we take $r(x)$ as a cyclotomic polynomial:

\begin{thm}\label{main thm 1}
Let $k$ be a positive integer and $D$ a square-free positive integer.
Define 
\[
r(x):=\Phi_k(x).
\]
Suppose that $(t(x), r(x), q(x))$ parameterizes a complete family of elliptic curves with CM-discriminant $D$ and embedding degree $k$ with respect to $r(x)$ 
{\rm (}these terms are defined in \S {\rm 2)}.
If one of the following holds, then $\rho(t(x), r(x), q(x)) \neq 1$.
\\
{\rm (i)} 
The degree $k \in \{1,2,p,2p\}$ for some prime number $p$.
{\rm (}If $p$ is odd, then $p$ satisfies that $p \equiv 3$ {\rm mod} $4$.{\rm )}
\\
{\rm (ii)} 
Conditions $k \in \{pQ,2pQ \}$, $D=p$, $t(x)=x+1$ and
\[
(p-2)Q+1<\varphi(k).
\]
Here, $p \ge 7$ be an odd prime number such that 
$p \equiv 3$ {\rm mod} $4$,
and
$Q \ge 2$ is an integer.
\end{thm}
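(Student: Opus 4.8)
The plan is to recast $\rho=1$ as a purely polynomial-degree statement and then obstruct it. Since $r(x)=\Phi_k(x)$ gives $\deg r=\varphi(k)$, and a complete family satisfies a CM relation $4q(x)=t(x)^2+D\,y(x)^2$ with $y(x)\in\Q[x]$, the leading coefficient of $t^2+Dy^2$ cannot vanish (as $D>0$), so $\deg q=2\max(\deg t,\deg y)$ is always \emph{even}. The divisibility $r\mid q+1-t$ forces $\deg r\le\deg q$, hence $\rho\ge1$, with equality precisely when $\deg q=\varphi(k)$. For $k\in\{1,2\}$ we have $\varphi(k)=1$, which is odd, so $\deg q\ne\varphi(k)$ and $\rho\ne1$ immediately; this settles those two values in (i).

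For all the remaining values one has $\Q(\zeta_k)=\Q(\zeta_p)$, and since $p\equiv3\pmod4$ this field has a unique imaginary quadratic subfield, namely $\Q(\sqrt{-p})$; thus the presence of $\sqrt{-D}$ forces $D=p$. With $g=\sum_{a=1}^{p-1}\left(\frac{a}{p}\right)\zeta_p^{\,a}$ the quadratic Gauss sum one has $g^2=-p$, so $1/\sqrt{-p}=-g/p$. The embedding-degree and group-order hypotheses give $t(\zeta)-1=\zeta^{\,i}$ for some $i$ prime to $k$ and $q(\zeta)=\zeta^{\,i}$, whence $y(\zeta)=(\zeta^{\,i}-1)/\sqrt{-p}$; therefore $\deg y$ is the degree of the reduced representative of $-(\zeta^{\,i}-1)g/p$, and $\deg q=2\deg y$.

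For part (ii) I take $t(x)=x+1$, so $i=1$ and $\deg t=1$. Expanding $(\zeta-1)g$ in the monomials $\zeta^{(k/p)a}$ and $\zeta^{(k/p)a+1}$ ($1\le a\le p-1$) and using $\left(\frac{p-1}{p}\right)=\left(\frac{-1}{p}\right)=-1$, the hypothesis $(p-2)Q+1<\varphi(k)$ is exactly what keeps the lower monomials in canonical position, so that after reducing the single top term through $\Phi_p(\zeta_p)=0$ one obtains a nonzero coefficient in degree $\tfrac{k}{p}a_{\max}+1$, where $a_{\max}$ is the largest quadratic residue mod $p$ in $[1,p-2]$; thus $\deg y\ge\tfrac{k}{p}a_{\max}+1$. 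Because $-1$ is a nonresidue, $p-1$ is a nonresidue, so all $(p-1)/2$ residues lie in $[1,p-2]$ and $a_{\max}\ge(p-1)/2$. Since $Q>\varphi(Q)$ gives $\varphi(k)/2<\tfrac{k}{p}\cdot\tfrac{p-1}{2}$, we conclude $\deg q=2\deg y>\varphi(k)$, i.e.\ $\rho>1$; the case $k=2pQ$ is handled by the same count with $k/p=2Q$.

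The subtle case is part (i) with $k\in\{p,2p\}$, where $i$ is arbitrary and the clean reduction is unavailable (indeed $(p-2)\cdot1+1=\varphi(p)$ violates the inequality). Suppose $\rho=1$, so $\deg q=p-1$; from $q(\zeta)=\zeta^{\,i}$ and Galois one gets $q(x)=x^{\,i}+c\,\Phi_p(x)$ and the identity $(t-2)^2+p\,y^2=4c\,\Phi_p(x)$. Factoring over $\Q(\sqrt{-p})$ as $\bigl(t-2+\sqrt{-p}\,y\bigr)\bigl(t-2-\sqrt{-p}\,y\bigr)=4c\,f\bar f$, where $f,\bar f$ are the degree-$(p-1)/2$ Gaussian-period factors of $\Phi_p$, and observing that $t-2+\sqrt{-p}\,y$ vanishes at every nonresidue power $\zeta^{\,a}$, one is forced to $t-2+\sqrt{-p}\,y=\lambda\bar f$ with $\lambda\in\Q(\sqrt{-p})$, so that $4q=\mathrm{N}_{\Q(\sqrt{-p})/\Q}(\lambda\bar f+2)$. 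To finish I would compute that the reduced coefficient of $\zeta^{\,m}$ in $(\zeta^{\,i}-1)g$ equals $\left(\frac{m-i}{p}\right)-\left(\frac{m}{p}\right)-1+\left(\frac{i+1}{p}\right)$; the equality $\deg y=(p-1)/2$ would demand that this vanish for all $m\in[(p+1)/2,\,p-2]$, i.e.\ that $\left(\frac{m-i}{p}\right)-\left(\frac{m}{p}\right)$ be constant along $(p-3)/2$ consecutive values of $m$. \textbf{The main obstacle is precisely this step}: showing, via a character-sum (Weil-type) estimate bounding the length of a run of equal Legendre-symbol differences, that no such $i$ exists, so that in fact $\deg y>(p-1)/2$ and $\rho>1$ for $p\ge7$. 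The boundary prime $p=3$ does allow $\deg y=(p-1)/2$, but there $q(x)=(x+1)^2$ is a perfect square and hence represents no primes, so no \emph{complete} family occurs; finally $k=2p$ reduces to $k=p$ through $\Q(\zeta_{2p})=\Q(\zeta_p)$ and $\Phi_{2p}(x)=\Phi_p(-x)$.
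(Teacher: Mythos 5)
Your reduction of $\rho=1$ to a degree statement, the parity argument for $k\in\{1,2\}$, and your part (ii) computation for $k=pQ$ all track the paper's own proof, which likewise uses $\rho=2\max\{\deg t,\deg y\}/\deg r$ and the Gauss-sum expansion of $(\zeta_k-1)\sqrt{-p}$ with coefficients $\chi_p(a)+1$ supported in degrees $aQ$, $aQ+1<\varphi(k)$. But there is a genuine gap, and it sits exactly where you flag it: part (i) for $k\in\{p,2p\}$ with $p\ge 7$, which is the mathematical core of the theorem. You correctly reduce to showing that no exponent $i$ makes the reduced coefficients $\chi_p(m-i)-\chi_p(m)-1+\chi_p(i+1)$ vanish for all $m$ in $[(p+1)/2,\,p-2]$, but you leave this unproved, deferring to a ``Weil-type'' bound on runs of equal Legendre-symbol differences. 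Besides being absent, such an estimate cannot close the argument on its own: character-sum bounds of Weil/Burgess type only forbid runs of length roughly $\sqrt{p}\log p$, whereas the run you must exclude has length about $p/2$, so the estimate is vacuous precisely for the small primes $p=7,11,19,\ldots$ and at best handles large $p$. The paper finishes elementarily by splitting on $\chi_p(g+1)\in\{1,0,-1\}$: for the value $1$, a counting/orthogonality argument yields a nonvanishing coefficient in the top range; for the value $0$ (i.e.\ $g=p-1$), the coefficient of $\zeta_p^{p-2}$ is $\chi_p(2)-2\neq 0$; for the value $-1$, the assumed vanishing forces the quadratic residues mod $p$ to be exactly $\{1,\ldots,(p-1)/2\}$, which is refuted by exhibiting a square in the upper half (explicitly for $p=7,11$, and via an integer $c$ with $\sqrt{p/2}<c<\sqrt{p}$ for $p\ge 19$). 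An argument of this kind is what your proposal is missing.

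Two smaller defects. First, part (i) also includes $k=2p$ with $p=2$, i.e.\ $k=4$, which your Gauss-sum framework (built for odd $p$) never touches; the paper disposes of $k=3,4,6$ by a separate explicit computation showing $h=1,\,1/3,\,1/2$ and $q$ equal to a constant times $(X+1)^2$, contradicting irreducibility of $q$ — your $p=3$ remark covers $k=3,6$ but not $k=4$. Second, in part (ii) your claim that $k=2pQ$ follows ``by the same count with $k/p=2Q$'' fails as stated: spacing $2Q$ puts the exponents $2Qa$ (up to $2Q(p-2)+1$) beyond $\varphi(2pQ)$, so reduction modulo $\Phi_{2pQ}$ cannot be avoided. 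The paper's fix is to write $\zeta_p=-\zeta_k^{Q}$, keeping spacing $Q$ with coefficients $\chi_p(a)+(-1)^a$ and using the index $a=p-4$; some such modification is required in your count as well.
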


For example, the inequality in (ii) holds if $k=pq$ and $p \le q$.
%We give the $\rho$-value in the each case of Theorem \ref{main thm 1} in
%Corollary \ref{rho-value of main thm 1}.

\begin{thm}\label{main thm 2}
Let $k$, $d$ be positive integers and $D$ a square-free positive integer.
Define
\[
r(x):=\Phi_{dk}(x),\ 
t(x):=x^{dg}+1,\ 
dg<\varphi(dk),
\]
where $\gcd(g,k)=1$.
Suppose that $(t(x), r(x), q(x))$ parameterizes a complete family of elliptic curves with embedding degree $k$ and CM-discriminant $D$ with respect to $r(x)$.
Then the following hold.
\\
{\rm (i)} 
Whether the $\rho$-value is $1$ or not can be reduced to the case where $\gcd(d,k)=1$ and $d$ is square-free 
by removing $\gcd(d, k)$ and the square-factor from $d$.
\\
{\rm (ii)} 
%Suppose $\gcd(d, k)=1$.
If one of the following holds, then $\rho(t(x), r(x), q(x)) \neq 1$.
\\
\quad {\rm (ii-a)} 
The degree $k=3, 6$.
\\
\quad {\rm (ii-b)} 
The degree $k$ has a square factor.
\end{thm}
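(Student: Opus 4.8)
The plan is to translate the whole problem into a statement about $\deg q$, and then to separate a ``degree'' obstruction from a ``reducibility'' obstruction. First I would use part (i) to assume throughout that $\gcd(d,k)=1$ and that $d$ is square-free. Since the family is complete, $q$ represents primes and is in particular irreducible over $\Q$, and one has $\rho\ge 1$ together with
\[
q(x)+1-t(x)=h(x)\,\Phi_{dk}(x),\qquad \deg q=\deg h+\varphi(dk).
\]
As $\deg t=dg<\varphi(dk)$, this yields $\rho=1+\deg h/\varphi(dk)$, so $\rho=1$ is equivalent to $h$ being a nonzero constant $c$, i.e.\ $q(x)=x^{dg}+c\,\Phi_{dk}(x)$. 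On the other hand the CM relation $4q=t^{2}+Df^{2}$ forces $\deg q=2\max(dg,\deg f)$, and reducing it modulo $\Phi_{dk}$ gives $f\equiv\pm y\pmod{\Phi_{dk}}$, where $y$ is the reduction of $(x^{dg}-1)/\sqrt{-D}$. Hence, unless already $\rho\ge 2$, the polynomial $f$ is reduced, $\deg f=\deg y$, and $\rho=1$ forces $\max(dg,\deg y)=\varphi(dk)/2$. It therefore suffices, in each case, to prove the strict inequality $\max(dg,\deg y)>\varphi(dk)/2$, or else to exhibit a factorisation of $q$.

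For part (ii-a) I would exploit that $\varphi(dk)=2\varphi(d)$ for $k\in\{3,6\}$, so $\varphi(dk)/2=\varphi(d)$. If $d>1$ then $dg\ge d>\varphi(d)=\varphi(dk)/2$, whence $\deg q\ge 2dg>\varphi(dk)$ and $\rho>1$ outright. The only delicate case is $d=1$, where $\varphi(k)=2$, $g=1$, $t=x+1$; here an explicit computation of $y$ (using $\sqrt{-3}\equiv 2x\pm 1$ modulo $\Phi_k$) gives $y=\lambda(x+1)$, proportional to $t$, so that over $\Q(\sqrt{-D})$
\[
4q=\bigl(t+\sqrt{-D}\,y\bigr)\bigl(t-\sqrt{-D}\,y\bigr)=(1+\lambda\sqrt{-D})(1-\lambda\sqrt{-D})\,(x+1)^{2}.
\]
Thus $q$ is a constant multiple of $(x+1)^{2}$, hence reducible and not prime-representing, contradicting completeness. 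This settles (ii-a).

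For part (ii-b), say $p^{2}\mid k$ with $p$ odd, the key tool is the identity $\Phi_{dk}(x)=\Phi_{dk/p}(x^{p})$, which exhibits $\Q[x]/(\Phi_{dk})$ as the graded ring $\bigoplus_{r=0}^{p-1}x^{r}\bigl(\Q[x^{p}]/\Phi_{dk/p}(x^{p})\bigr)$, graded by the residue of the exponent modulo $p$, a grading preserved under reduction. Because the conductor $f_D$ of $\Q(\sqrt{-D})$ satisfies $v_p(f_D)\le 1<v_p(dk)$, one gets $f_D\mid dk/p$, so $\sqrt{-D}=s(x^{p})$ lies in grade $0$. Writing $dg=pm+i$ with $i=(dg\bmod p)\neq 0$ (here $p\nmid dg$ since $p\mid k$ while $\gcd(g,k)=\gcd(d,k)=1$) and splitting the congruence $y\,s(x^{p})\equiv x^{dg}-1$ by grade, I would obtain $y=x^{i}y_i(x^{p})+y_0(x^{p})$ with $y_0\equiv\tfrac1D s$ and $y_i\equiv-\tfrac1D u^{m}s$ modulo $\Phi_{dk/p}$, $u=x^{p}$. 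The grade-$0$ part alone has $x$-degree $p\cdot\deg(\sqrt{-D}\bmod\Phi_{dk/p})$, and the grade-$i$ part occupies exponents $\equiv i\not\equiv 0\pmod p$, so no cancellation occurs; combined with the bound $\deg(\sqrt{-D}\bmod\Phi_{n})\ge\varphi(n)/2$ at $n=dk/p$, this gives $\deg y\ge\varphi(dk)/2$, strictly unless the configuration is extremal.

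The main obstacle I anticipate is precisely this uniform estimate $\deg(\sqrt{-D}\bmod\Phi_{n})\ge\varphi(n)/2$ (which I would prove using that $\sqrt{-D}$ is purely imaginary under $\zeta\mapsto\zeta^{-1}$ and therefore cannot lie in the degree-$\varphi(n)/2$ real subring), together with the extremal borderline where $\max(dg,\deg y)=\varphi(dk)/2$ survives on degree grounds alone. In that borderline the argument must revert to the mechanism of (ii-a): one shows that the norm form $4q=(t+\sqrt{-D}f)(t-\sqrt{-D}f)$ splits over $\Q$, i.e.\ that $t+\sqrt{-D}f$ is proportional to a rational polynomial, so that $q$ acquires a rational root and fails to be prime-representing. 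Pinning down exactly when this proportionality is forced — which is where the arithmetic of $D$ relative to $dk$ enters, and where the hypotheses $k\in\{3,6\}$ and $p^{2}\mid k$ are really used — is the crux; the prime $p=2$ (i.e.\ $4\mid k$) needs separate treatment, since there $f_D$ may be divisible by $4$ or $8$ and the grade-$0$ placement of $\sqrt{-D}$ can fail, so those subcases I would instead absorb into the small-$\varphi(k)$ degree argument of (ii-a).
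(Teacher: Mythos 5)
Your part (ii-a) is essentially the paper's own argument (degree contradiction for $d\ge 2$; explicit computation for $d=1$ showing $q$ is a rational multiple of $(x+1)^2$), and your key lemma for (ii-b) --- the bound $\deg(\sqrt{-D}\bmod \Phi_n)\ge \varphi(n)/2$ --- is true and provable along the lines you sketch (if $s(\zeta_n)=\sqrt{-D}$ with $e=\deg s<\varphi(n)/2$, then $x^{e}s(x)+x^{e}s(1/x)$ vanishes at $\zeta_n$, has degree $\le 2e<\varphi(n)$, hence is the zero polynomial, which is impossible by comparing top and bottom degrees). But the proposal does not prove (ii-b). For odd $p$ your grading argument yields only $\deg y\ge p\,\varphi(dk/p)/2=\varphi(dk)/2$, whereas refuting $\rho=1$ requires the strict inequality $\deg y>\varphi(dk)/2$ (or a reducibility argument), and you explicitly leave this borderline case open (``is the crux''). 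The borderline is not vacuous: whenever $\sqrt{-D}$ has a representative of degree exactly $\varphi(dk/p)/2$ modulo $\Phi_{dk/p}$ (e.g.\ $D=3$, $dk/p=3$, i.e.\ $d=1$, $k=9$), your grade-$0$ estimate is tight, so strictness would have to come from the grade-$i$ component $x^{i}u^{m'}s(u)$ (where $dg=pm'+i$); but $\zeta^{m'}\sqrt{-D}$ is not purely imaginary, so your anti-symmetry bound does not apply to it, and twisted elements can indeed have small degree (e.g.\ $\zeta_4\cdot\sqrt{-1}=-1$). This unresolved case is exactly where the paper does its real work: its Lemma \ref{claim} pins down $m<2dg<2m$ and $a\neq2$, and then an inductive comparison of coefficients in $Dy(x)^2=4h\Phi_{dak'}(x^a)-(x^{dg}-1)^2$, split into the cases $y_0\neq0$ and $y_0=0$, forces $y(x)\in\Q[x^a]$, contradicting the presence of the term $2x^{dg}$ with $a\nmid dg$ on the right-hand side. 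Your proposal contains no substitute for this step.

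Second, your treatment of the prime $2$ is wrong, not merely incomplete. When the square part of $k$ is a power of $2$ (e.g.\ $k=8,12,16,24$), you propose to ``absorb'' these cases into the small-$\varphi(k)$ degree argument of (ii-a); but that argument produces a contradiction only because $\varphi(k)=2$ there (it rests on $dg\le\varphi(d)\varphi(k)/2=\varphi(d)<d$). For $k=8$ or $k=12$ with $d=1$, $g=1$, degrees alone are consistent with $\rho=1$ (one has $dg=1\le\varphi(dk)/2=2$), and one must actually verify $\deg y=3>2$ for each admissible $D$ ($D\in\{1,2\}$, resp.\ $D\in\{1,3\}$), or argue structurally as the paper does; note that the paper's $y_0=0$ branch lands precisely in this even-$a$, $D=1$ situation and needs the parity of $dg$ plus further coefficient comparisons to dispose of it. Finally, part (i) is itself one of the assertions of the theorem --- the paper derives it from Lemma \ref{p-power} via the substitutions $\Phi_{dk}(x)=\Phi_{d'k}(x^e)$ and $\Phi_{dk}(x)=\Phi_{ad'k}(x^{a})$ --- whereas your proposal merely invokes it without proof.
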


%%%%%%%%%%%%%%%%%%%%%%%%%%%%%%%%%%%%%%%%%

%%%%%%%%%%%%%%%%%%%%%%%%%%%%%%%%%%%%%%%%%

\section{Family of Pairing-Friendly Elliptic Curves}\label{ps-friendly}
In this section, we briefly explain cryptosystems of pairing-based elliptic curves.
After that, we describe the strategy of constructing families of pairing-friendly elliptic curves as proposed by Brezing and Weng \cite{BW}.
We use the notation $\N$, $\Z$, $\Q$ as the set of positive integers, rational integers and rational numbers, respectively.
We describe $\F_q$ as a finite field with order $q$.
%Let $G_i$ $(i=1,2,3)$ be finite cyclic groups with order $r$ and
%$e_r \colon G_1 \x G_2 \to G_3$
%a non-degenerate pairing which can be efficiently computed.
%Pairing-based cryptographic systems require such pairings.
Pairing-based cryptographic systems require a non-degenerate alternative pairing which can be efficiently computed.
For example, the most common pairings used in applications are Weil and Tate pairings.
We define the embedding degree with respect to a cyclic group of order $r$ as the degree of the extension field which the pairing maps to.
In other words,

\begin{df}\label{embed deg}
Let be $q$ a power of a prime number, $E$ an elliptic curve over $\F_q$, and $r$ a prime number such that $r \mid \# E(\F_q)$, $r \nmid q$.
Here $E(\F_q)$ stands for the points of $E$ over $\F_q$.
The embedding degree of $E$ with respect to $r$ is the smallest positive integer $k$ such that
$r \mid \# \F_{q^k}^\x$.
\end{df}

Suitable curves for pairing-based cryptographic systems require a subgroup with large prime order $r$ and an efficiently computable pairing with small embedding degree $k$.
Therefore, for many embedding degrees and prime numbers, we should construct elliptic curves having such properties.
We remark that the embedding degree of supersingular elliptic curves is at most $6$.
Hence, in practice, ordinary elliptic curves are used.
The order of an elliptic curve $E$ over $\F_q$ is given by
\[
\# E(\F_q)=
q+1-t,
\]
where $t \in \Z$ is the trace of Frobenius map.

\begin{lem}[{\cite[Proposition 2.4]{FST}}]
Assume that $r \nmid kq$.
Then the condition of Definition {\rm \ref{embed deg}} is equivalent to
$
\Phi_k(t-1) \equiv 0
\ 
{\rm mod}\ r
$.
Here, $\Phi_k(x)$ is the $k$th cyclotomic polynomial.
\end{lem}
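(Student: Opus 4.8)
The plan is to translate the analytic-looking Definition \ref{embed deg} into a purely multiplicative-order statement and then invoke a standard fact about cyclotomic polynomials over $\F_r$. First I would unwind the definition: the embedding degree $k$ is the least positive integer with $r \mid \#\F_{q^k}^\x = q^k-1$. Since $r$ is prime and $r \nmid q$, this says precisely that $q^k \equiv 1 \pmod{r}$ while no smaller positive exponent gives $1$; that is, $k$ is the multiplicative order of $q$ in $(\Z/r\Z)^\x$. The hypothesis $r \mid \#E(\F_q) = q+1-t$ gives $q \equiv t-1 \pmod{r}$, so $k$ is equally the multiplicative order of $t-1$ modulo $r$. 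In particular $t-1 \equiv q \not\equiv 0 \pmod{r}$, so $r \nmid (t-1)$.

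With $a := t-1$, the lemma thus reduces to the assertion that, for a prime $r$ with $r \nmid a$ and $r \nmid k$, the order ${\rm ord}_r(a)$ equals $k$ if and only if $\Phi_k(a) \equiv 0 \pmod{r}$. The standing hypothesis $r \nmid kq$ supplies both ingredients $r \nmid q$ and $r \nmid k$ that I need here. The proof of this reduced statement rests on the factorization $x^k-1 = \prod_{d \mid k}\Phi_d(x)$ together with the divisibility $\Phi_d(x) \mid x^d-1$ in $\Z[x]$, which yields the implication $r \mid \Phi_d(a) \Rightarrow a^d \equiv 1 \pmod{r}$.

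For the ``if'' direction assume ${\rm ord}_r(a)=k$. Then $r \mid a^k-1 = \prod_{d\mid k}\Phi_d(a)$, so $r \mid \Phi_d(a)$ for some $d \mid k$; the divisibility above forces $a^d \equiv 1$, hence $k \mid d$ and therefore $d=k$, i.e.\ $r \mid \Phi_k(a)$. (This direction does not use $r \nmid k$.) For the converse, assume $r \mid \Phi_k(a)$; then $a^k \equiv 1$, so $d := {\rm ord}_r(a)$ divides $k$. Running the previous argument with $d$ in place of $k$ shows $r \mid \Phi_d(a)$ as well. If $d$ were a proper divisor of $k$, then the residue class $\bar a \in \F_r$ would be a common root of the two distinct factors $\Phi_d$ and $\Phi_k$ of $x^k-1$, hence a repeated root of $x^k-1$ in $\F_r[x]$.

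The one delicate point --- and the only place where $r \nmid k$ is used --- is excluding this repeated root. I would argue that a repeated root of $x^k-1$ over $\F_r$ must also annihilate its formal derivative $k x^{k-1}$; since $\bar a \neq 0$, this would force $r \mid k$, contrary to hypothesis. Hence $d=k$, completing the equivalence and with it the lemma. I expect the order-reformulation and the forward ``if'' step to be routine bookkeeping; the genuine obstacle is precisely this separability argument, which is exactly why the hypothesis $r \nmid kq$ (rather than merely $r \nmid q$) is imposed.
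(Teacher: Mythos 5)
Your proof is correct and complete: the reduction to the statement that ${\rm ord}_r(t-1)=k$ if and only if $r \mid \Phi_k(t-1)$, the use of $x^k-1=\prod_{d\mid k}\Phi_d(x)$, and the separability argument via the formal derivative $kx^{k-1}$ (which is exactly where $r\nmid k$ enters) are all sound. Note that the paper itself gives no proof of this lemma --- it simply cites it as Proposition 2.4 of Freeman--Scott--Teske --- so your argument supplies the standard proof that the paper delegates to that reference, and it is essentially the same argument found there.
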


We describe the CM-method as proposed by Atkin and Moran, which is the strategy of constructing elliptic curves with given parameters.

\begin{thm}[Atkin-Moran \cite{AM}]
Let $k$ be a positive integer.
Suppose that there are some $t,r,q$ satisfying the following properties:
\\
{\rm (i)}
$q$ is a power of a prime number.
\\
{\rm (ii)}
$r$ is a prime such that $r \nmid k$.
\\
{\rm (iii)}
$r \mid q+1-t$, in other words, there is $h \in \N$ such that $rh=q+1-t$.
\\
{\rm (iv)}
$r \mid q^k-1$, and $r \nmid q^i-1$ $(1 \le i <k)$.
%$\Phi_k(t-1) \equiv 0$ {\rm mod} $r$.
\\
{\rm (iv)}
There exist some $y \in \Z$ and some square-free positive integer $D$ such that an equation
$Dy^2=4q-t^2$, i.e., $Dy^2=4rh-(t-2)^2$
holds.
\\
{\rm (}$D$ is called a CM-discriminant{\rm )}
Then there exists an elliptic curve $E$ over $\F_q$ which satisfies the followings:
\\
{\rm (a)}
$\# E(\F_q) = q+1-t$ and there is a subgroup of $E(\F_q)$ with prime order $r$.
\\
{\rm (b)}
The embedding degree with respect to $r$ is $k$.
\end{thm}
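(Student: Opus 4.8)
The plan is to realize the hypothetical Frobenius endomorphism as an actual one via the CM method, so that the desired curve appears as the reduction modulo $p$ of a complex-multiplication elliptic curve defined over a number field. First I would package the arithmetic data into a Weil number. From $Dy^2 = 4q - t^2$ we get $t^2 - 4q = -Dy^2 \le 0$, so the polynomial $X^2 - tX + q$ has non-positive discriminant and its roots lie in the imaginary quadratic field $K := \Q(\sqrt{-D})$. Setting
\[
\pi := \frac{t + y\sqrt{-D}}{2} \in K,
\]
a direct check shows $\pi$ is a root of $X^2 - tX + q$, hence an algebraic integer, with $\mathrm{Tr}_{K/\Q}(\pi) = t$ and $N_{K/\Q}(\pi) = \pi\overline{\pi} = (t^2 + Dy^2)/4 = q$; in particular $|\pi| = \sqrt{q}$, so $\pi$ is a Weil $q$-number (in the generic case $y \neq 0$ it generates $K$). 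Note also that $r \nmid q$, since $r \mid q$ together with $r \mid q^k-1$ from (iv) would force $r \mid 1$.

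The heart of the argument is to produce an elliptic curve over \emph{exactly} $\F_q$ whose Frobenius is $\pi$. Let $\mathcal{O}$ be an order of $K$ containing $\pi$ (for instance $\Z[\pi]$, or the maximal order $\mathcal{O}_K$ when the conductor permits). Over $\C$ the elliptic curves with complex multiplication by $\mathcal{O}$ are parameterized by the ideal class group of $\mathcal{O}$, their $j$-invariants are algebraic integers generating the ring class field $H$ of $\mathcal{O}$ over $K$, and these assemble into the ring class polynomial $H_{\mathcal{O}}(X) \in \Z[X]$. Fixing a prime $\mathfrak{P}$ of $H$ above the rational prime $p$ underlying $q$, Deuring's reduction theory shows that a CM curve with good reduction at $\mathfrak{P}$ reduces to an elliptic curve over the residue field whose Frobenius generates the same ideal as $\pi$. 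Tracking the splitting type of $p$ in $K$ and the residue degree of $\mathfrak{P}$, one arranges that the residue field is precisely $\F_q$ and, after replacing the curve by a suitable twist to fix the sign of the trace, that the reduced curve $E/\F_q$ has Frobenius endomorphism equal to $\pi$. Conceptually this is the Honda--Tate/Waterhouse classification, which guarantees that every Weil $q$-number of degree $\le 2$ arises from an elliptic curve over $\F_q$; the CM construction makes the curve explicit. Either way the output is a curve $E/\F_q$ with
\[
\#E(\F_q) = q + 1 - t.
\]

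With $E$ in hand, both conclusions follow from the hypotheses. For (a): by (iii) we have $r \mid q + 1 - t = \#E(\F_q)$, and since $r$ is prime, Cauchy's theorem applied to the finite abelian group $E(\F_q)$ yields a subgroup of order $r$. For (b): condition (iv) states that $r \mid q^k - 1 = \#\F_{q^k}^{\x}$ while $r \nmid q^i - 1$ for $1 \le i < k$, i.e.\ $k$ is the multiplicative order of $q$ modulo $r$; combined with $r \mid \#E(\F_q)$ and $r \nmid q$ this is exactly the defining property in Definition \ref{embed deg}, so the embedding degree of $E$ with respect to $r$ equals $k$.

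The main obstacle is the reduction step. Building a CM curve over $\C$ and its class polynomial is routine, and deducing (a)--(b) from the resulting curve is immediate; the real work is the descent controlling the field of definition and the Frobenius simultaneously. One must show that the reduction can be taken over the exact field $\F_q$ (the correct power of $p$, governed by the residue degree of $\mathfrak{P}$ and the factorization of $(\pi)$ into primes above $p$) and that a twist realizes the precise trace $t$ rather than $-t$. This matching is where the full strength of complex multiplication and Deuring's theorems is required.
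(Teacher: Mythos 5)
The paper does not prove this theorem at all: it is quoted from Atkin--Morain \cite{AM} as a known result, so there is no in-paper argument to compare yours against. Your outline is the standard proof of the CM method --- package $(t,y,D,q)$ into the Weil number $\pi=(t+y\sqrt{-D})/2$, realize $\pi$ as the Frobenius of the reduction of a CM curve via Deuring's theory (or cite Honda--Tate/Waterhouse), and then deduce (a) from (iii) plus Cauchy's theorem and (b) from (iv) --- and those final deductions, as well as the bookkeeping ($\pi$ is an algebraic integer of trace $t$ and norm $q$, and $r\nmid q$), are correct.

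However, one step is a genuine overstatement, and it matters for the statement exactly as printed. Honda--Tate/Waterhouse does \emph{not} guarantee that every Weil $q$-number of degree $\le 2$ arises from an elliptic curve over $\F_q$: when $q=p^n$ is a proper prime power and $p\mid t$, the simple abelian variety attached to $\pi$ can be a surface. Concretely, take a prime $p\equiv 1 \pmod 3$, $q=p^2$, $t=p$; then $4q-t^2=3p^2$, so $D=3$, $y=p$ satisfy the CM equation, and choosing a prime $r\mid q+1-t=p^2-p+1$ with $r\neq 3$ gives $k=3$ and all of the hypotheses (for instance $q=49$, $t=7$, $r=43$, $k=3$, $h=1$). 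Yet by Waterhouse's classification no elliptic curve over $\F_{p^2}$ has trace $\pm p$ when $p\equiv 1\pmod 3$; in particular no curve over $\F_{49}$ has $43$ points. So the theorem as literally stated, with ``$q$ a power of a prime,'' is false, and no proof can close this gap. The result is correct when $q$ is prime (the actual setting of \cite{AM}) or under the ordinarity hypothesis $p\nmid t$, and in that case your argument is legitimate: $\pi$ is then an ordinary (or permissible supersingular) Weil number and both the Deuring reduction step and the classification citation apply. You should add that hypothesis explicitly. Finally, note that the descent step you flag as ``the main obstacle'' --- matching the residue field to exactly $\F_q$ and choosing the twist realizing $t$ rather than $-t$ --- is indeed where all the substance lies; as written it is a plan rather than a proof, and a complete argument would have to carry out Deuring's lifting/reduction theorems in detail there.
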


In practice, the method can construct curves over finite fields when $D<10^{12}$ \cite{Sut}.
Suppose that an elliptic curve $E/\F_q$ has an embedding degree $k$ with respect to a prime number $r$.
For applying it to the pairing-based cryptography with sufficiently security level, Freeman-Scott-Teske \cite{FST} defined ``pairing-friendly" curves.
Namely, $E$ is a pairing-friendly if the following two conditions hold:
$
r \ge \sqrt{q}$,
$
k<(\log_2 r)/{8}.
$

One of known algorithms of constructing pairing-friendly elliptic curves is the Cocks-Pinch method.
The Brezing-Weng method which we refer later (Theorem \ref{BW}) basically uses the Cocks and Pinch idea over polynomials.
Moreover, define
\begin{eqnarray}\label{rho of p-f cv}
\rho(t,r,q):=\frac{\log q}{\log r}.
\end{eqnarray}
The value 
%is the size of the field while $r$ is the size of the group which 
effects the superiority of our cryptography.
The case where $\rho(t,r,q)$ equals (or is close to) $1$ is ideal.
One of the aims in the study of pairing-based cryptography is to seek for curves with such values.
We refer that the curves produced by Cocks-Pinch method tend to have $\rho$-values around $2$.

\vspace{0.3cm}

\noindent
{\bfseries Brezing-Weng method}
\\
For applications, we would like to be able to construct curves of specified bit size.
To end this, we describe families of pairing-friendly curves for which the parameters $t$, $r$, $q$  are given as polynomials $t(x),r(x),q(x)$ in terms of a parameter $x$.
We give the idea of Brezing and Weng in \cite{BW}.
According to \cite{FST}, we define the following which is based on the conjecture of Bouniakowski and Schnzel \cite[p.323]{Lang}.

\begin{df}\label{rep primes}
{\rm (i)} 
Let $f(x)$ be a polynomial in $\Q[x]$.
If there is some $a \in \Z$ such that $f(a) \in \Z$, then we say $f(x)$ represents integers.
\\
{\rm (ii)} 
Assume that a non-constant irreducible polynomial $f(x) \in \Q[x]$ represents integers.
If $f(x)$ has a positive leading coefficient and 
\[
{\rm gcd}(\{ f(x) |\; x\ such\ that\ f(x) \in \Z\})=1,
\]
then we say $f(x)$ represents primes.
\end{df}

Bouniakowski, Schnzel and some mathematicians conjectured that if $f(x)$ represents primes, then $f(x)$ has infinitely many prime values.

\begin{df}\label{family of p-f cv}
Let $k \in \N$ and $D$ a positive square-free integer.
Suppose that triple of non-zero polynomials $(t(x),r(x),q(x)) \in \Q[x]^{3}$ satisfies the following conditions:
\\
%{\rm (i)} 
%$t(x)$ represents integers.
%\\
{\rm (i)} 
$r(x)$ represents primes.
\\
{\rm (ii)} 
$q(x)$ is a power of a polynomial representing primes.
\\
{\rm (iii)} 
$r(x) \mid q(x)+1-t(x)$, i.e., there exists $h(x) \in \Q[x]$ such that $h(x)r(x)=q(x)+1-t(x)$.
\\
{\rm (iv)} 
$r(x) \mid \Phi_k(t(x)-1)$.
\\
{\rm (v)} 
There is some $y(x) \in \Q[x]$ such that
$Dy(x)^2
=
4q(x)-t(x)^2
=
4h(x)r(x)-(t(x)-2)^2$.
\\
Then we say that $(t(x),r(x),q(x))$ parameterizes a complete family of pairing-friendly elliptic curves with embedding degree $k$ and CM-discriminant $D$.
Moreover, we define
$
\rho(t(x),r(x),q(x)):=
{\ds \lim_{x \to \infty}}\frac{\log q}{\log r}
=
\frac{\deg q(x)}{\deg r(x)}.
$
\end{df}

The definition of $\rho(t(x),r(x),q(x))$ is different from (\ref{rho of p-f cv}).
In addition, we note that it may happen that $(t(x),r(x),q(x))$ satisfying Definition \ref{family of p-f cv} does not lead to any explicit examples of elliptic curves:
for example, $r(x)$, $q(x)$ may never have integer values simultaneously.
But, in known examples used in applications, it does not happen.
We note that $t(x)$, $y(x)$ are determined up to modulus $r(x)$.
We have
\begin{eqnarray}\label{rho,y,t}
\rho(t(x),r(x),q(x))
=
\frac{2 \max\{ \deg y, \deg t\}}{\deg r}.
\end{eqnarray}

Next, we describe the Brezing-Weng method.
Denote the set of the $k$th primitive roots of unity by $\mu_k \subset \overline{\Q}$.
Here $\overline{\Q}$ is an algebraic closure of $\Q$.

\begin{thm}[Brezing-Weng \cite{BW}]\label{BW}
Let $k \in \N$ and $D$ a positive square-free integer.
Then execute the following steps.
\\
1. 
Choose an algebraic number field $K$ which contains $\Q(\mu_k)$ and $\Q(\sqrt{-D})$.
\\
2. 
Find an irreducible polynomial $r(x) \in \Z[x]$ with positive leading coefficient and an isomorphism such that $\Q[x]/(r(x)) \up{\sim}{\to} K$.
\\
3. 
Let $t(x)-1 \in \Q[x]$ be a polynomial mapping to a fixed element $\zeta_k \in \mu_k$ by the above isomorphism.
\\
4. 
Let $y(x) \in \Q[x]$ be a polynomial mapping to $\frac{\zeta_k-1}{\sqrt{-D}} \in K$ by the above isomorphism.
\\
5. 
Let $q(x) \in \Q[x]$ be given by
$q(x):=(Dy(x)^2+t(x)^2)/{4}$.
\\
If $q(x)$ represents primes and $y(x)$ represents integers,
then the triple $(t(x),r(x),q(x))$ parameterizes a complete family of elliptic curves with embedding degree $k$ and CM-discriminant $D$.
\end{thm}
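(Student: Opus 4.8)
The plan is to verify, one at a time, the five conditions (i)--(v) of Definition \ref{family of p-f cv} for the triple $(t(x),r(x),q(x))$ produced by Steps 1--5. The engine driving the whole argument is the isomorphism $\phi\colon \Q[x]/(r(x)) \xrightarrow{\sim} K$ fixed in Step 2. Since $r(x)$ is irreducible, the composite $\Q[x] \surj \Q[x]/(r(x)) \xrightarrow{\phi} K$ has kernel exactly the ideal $(r(x))$; hence for every $f(x) \in \Q[x]$ we have the key equivalence
\[
r(x) \mid f(x) \iff \phi(f(x)) = 0 \text{ in } K.
\]
In this way each divisibility assertion reduces to a finite computation inside the field $K$, where by Steps 3--4 the elements $t(x)-1$ and $y(x)$ are, respectively, the $\phi$-preimages of $\zeta_k$ and $(\zeta_k-1)/\sqrt{-D}$.

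First I would dispatch the conditions that require no computation. Condition (v) is immediate from Step 5: the defining relation $q(x) = (Dy(x)^2 + t(x)^2)/4$ rearranges to $Dy(x)^2 = 4q(x) - t(x)^2$ with $y(x) \in \Q[x]$, which is exactly (v). Condition (ii) is the hypothesis that $q(x)$ represents primes, since a polynomial representing primes is trivially the first power of one. Condition (i) holds because Step 2 selects $r(x)$ irreducible with positive leading coefficient in $\Z[x]$, so---after noting that the gcd of its values is $1$---it represents primes in the sense of Definition \ref{rep primes}.

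For (iii) and (iv) I would run the computation in $K$. Applying $\phi$ and using $\phi(t(x)) = \zeta_k + 1$ together with $\phi(y(x)) = (\zeta_k - 1)/\sqrt{-D}$, one gets $\phi(Dy(x)^2) = -(\zeta_k - 1)^2$ and $\phi(t(x)^2) = (\zeta_k+1)^2$, so that
\[
\phi(q(x)) = \tfrac{1}{4}\bigl(-(\zeta_k-1)^2 + (\zeta_k+1)^2\bigr) = \zeta_k.
\]
Hence $\phi(q(x)+1-t(x)) = \zeta_k + 1 - (\zeta_k+1) = 0$, which gives (iii) through the key equivalence; and since $\zeta_k$ is a primitive $k$th root of unity, $\phi(\Phi_k(t(x)-1)) = \Phi_k(\zeta_k) = 0$, which gives (iv).

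The algebra above is routine; the genuine content of the theorem has in effect been pushed into the hypotheses. The real obstacle is number-theoretic rather than algebraic: one must guarantee that $r(x)$ and $q(x)$ simultaneously represent primes and that $y(x)$ represents integers, so that the formal family actually specializes to infinitely many honest pairing-friendly curves. This is exactly why those requirements appear as standing assumptions resting on the Bouniakowski--Schinzel conjecture rather than as claims one proves. On the algebraic side the only point demanding care is the justification of the key equivalence, where the irreducibility of $r(x)$---ensuring that $\Q[x]/(r(x))$ is a field and that $\phi$ is a genuine isomorphism---is indispensable.
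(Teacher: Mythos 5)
The paper itself never proves this theorem --- it imports it from Brezing--Weng \cite{BW} (in the formulation of \cite{FST}) --- so your proposal can only be measured against the standard argument, and your verification is essentially that argument. Its computational core is correct: the kernel of $\Q[x] \to K$ is exactly $(r(x))$, so each divisibility in Definition \ref{family of p-f cv} becomes an identity in $K$, and from $\phi(t(x)) = \zeta_k + 1$ and $\phi(y(x)) = (\zeta_k-1)/\sqrt{-D}$ you correctly obtain $\phi(q(x)) = \frac{1}{4}\bigl((\zeta_k+1)^2 - (\zeta_k-1)^2\bigr) = \zeta_k$, which yields (iii) and (iv); condition (ii) is the hypothesis on $q(x)$; and (v) is Step 5 rearranged (you should also record the second equality $4q(x)-t(x)^2 = 4h(x)r(x)-(t(x)-2)^2$ demanded by the definition, but it follows at once from the relation $q(x)+1-t(x)=h(x)r(x)$ that you establish in (iii)).

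The one genuine gap is condition (i). Your clause ``after noting that the gcd of its values is $1$'' is not a harmless observation: that statement does not follow from the hypotheses, and it is false in general. Take $r(x) = x^2+x+4$: it is irreducible over $\Q$ (discriminant $-15$), lies in $\Z[x]$, has positive leading coefficient, and $\Q[x]/(r(x)) \simeq \Q(\sqrt{-15})$, so it is a legitimate output of Step 2 for $k \in \{1,2\}$ and $D=15$; yet $n^2+n+4 = n(n+1)+4$ is even for every $n \in \Z$, so the gcd of its values is $2$ and $r(x)$ does not represent primes in the sense of Definition \ref{rep primes}. Nothing in the remaining hypotheses (that $q(x)$ represents primes and $y(x)$ represents integers) repairs this, so condition (i) of Definition \ref{family of p-f cv} simply cannot be deduced from the theorem as stated; it must be imposed as an additional hypothesis that $r(x)$ itself represents primes. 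This looseness is really in the statement (the original sources either assume it outright or use a weaker definition of family in which $r(x)$ need only be irreducible, integer-valued, and have positive leading coefficient), and it is harmless for the present paper, where $r(x)$ is always a cyclotomic polynomial $\Phi_k(x)$ or $\Phi_{dk}(x)$ and hence does represent primes; but a correct proof must flag it as an assumption rather than assert it as a fact.
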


\begin{rem}\label{BN}
{\rm The choice of $r(x)$ is an important part in this algorithm.
In \cite{BW}, Brezing and Weng calculated the cases where $D=1$ or $3$ and $K$'s are cyclotomic fields.
On the other hand, \cite{FST} collected many examples.
Barreto and Naehrig \cite{BN} gave an example of $\rho(t(x),r(x),q(x))=1$ with $k=12$, $D=3$:
\[
t(x)=6x^2+1,\ 
r(x)=36x^4+36x^3+18x^2+6x+1,\ 
q(x)=36x^4+36x^3+24x^2+6x+1.
\]
This is the {\it only one} known example of $(t(x),r(x),q(x))$ which parameterizes a complete family of curves with $\rho$-value $1$.
}
\end{rem}

\begin{rem}
{\rm We can also consider another problem which is to find a family of elliptic curves, so-called sparse family.
In detail, to find a family which has infinite many integral solutions $(x,y)$ of
\begin{eqnarray}\label{sparse}
Dy^2=4h(x)r(x)-(t(x)-2)^2,
\end{eqnarray}
instead of finding the equation
$Dy(x)^2=4h(x)r(x)-(t(x)-2)^2$. 
Many results about this problem are also known.
We refer the following examples for reference.
}

%\begin{prop}{\rm (\cite[Proposition 2.9]{FST})}
%Suppose that the triple of polynomials $(t(x),r(x),q(x))$ satisfies {\rm (i), \ldots, (iv)} in Definition {\rm \ref{family of p-f cv}} and {\rm (\ref{sparse})}.
%\\
%{\rm (i)}
%Suppose that $k=1$.
%Then $\rho(t(x),r(x),q(x)) \ge 2$ if either of the following conditions holds:
%\[
%\begin{cases}
%{\rm (i{\mbox{-}}a)}
%&
%\deg t \ge 1,
%\\
%{\rm (i{\mbox{-}}b)}
%&
%there\ are\ infinitely\ many\ integer\ solutions\ (x,y)\ to\ the\ equation\ {\rm (\ref{sparse})}\ 
%\\
%&
%for\ which\ r(x)\ is\ square{\mbox{-}}free\ and\ {\rm gcd}(r(x),D)=1.
%\end{cases}
%\]
%{\rm (ii)}
%Suppose that $k=2$ then $\rho(t(x),r(x),q(x)) \ge 2$.
%\end{prop}

\begin{prop}[Miyaji-Nakabayashi-Takano \cite{MNT}, Freeman \cite{Free}]
The following pairs of polynomials satisfy {\rm (i), \ldots, (iv)} in Definition {\rm \ref{family of p-f cv}} and {\rm (\ref{sparse})}.
Moreover, their $\rho$-values
$
\rho(t(x),r(x),q(x))=
{
\frac{\deg q(x)}{\deg r(x)}
}
$
are $1$.
\\
{\rm (i)}
$k=3${\rm :}
$
t(x)=-1\pm6x,\ r(x)=12x^2 \pm 6x+1,\ q(x)=12x^2-1
.$
\\
{\rm (ii)}
$k=4${\rm :}
$t(x)=-x\ ({\rm resp.\ }x+1),\ r(x)=x^2+2x+2\ ({\rm resp.\ }x^2+1),\ q(x)=x^2+x+1
.$
\\
{\rm (iii)}
$k=6${\rm :}
$t(x)=1\pm 2x,\ r(x)=4x^2\pm 2x+1,\ q(x)=4x^2+1.$
\\
{\rm (iv)}
$k=10${\rm :}
$t(x)=10x^2+5x+3,\ r(x)=25x^4+25x^3+15x^2+5x+1,\ q(x)=25x^4+25x^3+25x^2+10x+3.$
\end{prop}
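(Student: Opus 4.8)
The plan is to take the four families one at a time and verify conditions (i)--(iv) of Definition \ref{family of p-f cv} together with (\ref{sparse}) by explicit polynomial manipulation, isolating the only nontrivial point in the sparse equation (\ref{sparse}). The $\rho$-value assertion is immediate from the definition $\rho(t(x),r(x),q(x))=\deg q/\deg r$: in each family $\deg q=\deg r$ (equal to $2$ for $k=3,4,6$ and to $4$ for $k=10$), so $\rho=1$.

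For (i) and (ii) I would check that each $r(x)$ and each $q(x)$ is irreducible over $\Q$, has positive leading coefficient, and has integer values of gcd $1$. The quadratic $r(x)$ and $q(x)$ have negative discriminant and are therefore irreducible; irreducibility of the quartics occurring for $k=10$ I would settle by excluding rational linear and quadratic factors. Since $r(0)=1$ in every family, the gcd of the values of $r(x)$ is $1$, so $r(x)$ represents primes; the same evaluation handles $q(x)$, which then represents primes with exponent $1$.

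Condition (iii) I would dispose of by computing $q(x)+1-t(x)$ and observing that it equals $r(x)$ in each case, so that $h(x)=1$; this both verifies (iii) and, crucially, reduces the sparse equation to $Dy^2=4r(x)-(t(x)-2)^2$. For (iv) the task is the polynomial identity $\Phi_k(t(x)-1)=r(x)\,s(x)$ for some $s(x)\in\Q[x]$. For $k=3,4,6$ this is a single substitution — for instance $\Phi_6(2x)=4x^2-2x+1=r(x)$ and $\Phi_3(6x-2)=3\,r(x)$ — while for $k=10$ it requires expanding the degree-$8$ polynomial $\Phi_{10}(10x^2+5x+2)$, which I expect to factor as $r(x)$ times a degree-$4$ polynomial.

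The substantive step, and the main obstacle, is (\ref{sparse}). Here one should note first that condition (v) of Definition \ref{family of p-f cv} genuinely fails: expanding $4q(x)-t(x)^2$ gives in every family a quadratic polynomial in $x$ (the top-degree terms cancel when $\deg q=\deg t^2$, as for $k=10$) which is not $D$ times the square of a polynomial, so no polynomial $y(x)$ exists. Instead, after multiplying by a suitable constant and completing the square, the substitution of a linear form $X$ in $x$ turns $Dy^2=4q(x)-t(x)^2$ into a generalized Pell equation $X^2-mD\,y^2=N$ with explicit constants $m,N$. This is precisely the reduction of Miyaji--Nakabayashi--Takano \cite{MNT} and Freeman \cite{Free}, and the infinitude of integral pairs $(x,y)$ then follows from the classical fact that such an equation, once it admits one solution, admits infinitely many. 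Accordingly I would exhibit the explicit Pell conic in each case and appeal to \cite{MNT,Free} for the existence of the initial solution and hence of the infinite family.
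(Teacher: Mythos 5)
The paper itself contains no proof of this proposition: it is quoted, inside a remark on sparse families, directly from Miyaji--Nakabayashi--Takano \cite{MNT} and Freeman \cite{Free}, so there is no internal argument to compare yours against. Your verification plan is sound and in fact goes further than the paper does: checking $\deg q=\deg r$ for the $\rho$-value, $q(x)+1-t(x)=r(x)$ (so $h(x)=1$), the divisibility $r(x)\mid\Phi_k(t(x)-1)$, and the reduction of (\ref{sparse}) to a generalized Pell equation is exactly the content of the cited sources; your observation that condition (v) of Definition \ref{family of p-f cv} genuinely fails in every case (so these are sparse rather than complete families) is also correct, e.g.\ $4q-t^2=15x^2+10x+3$ for $k=10$ cannot be $D$ times a square of a linear polynomial. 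Since you, like the paper, ultimately defer the existence of an initial Pell solution to \cite{MNT,Free}, the two treatments rest on the same external foundation, and your contribution is the explicit polynomial verification.

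Three corrections to the details. First, your blanket claim that the quadratics are irreducible because their discriminants are negative fails for $q(x)=12x^2-1$ ($k=3$), whose discriminant is $48>0$; irreducibility still holds because $48$ is not a rational square, but the stated reason is wrong. Second, for $k=3$ and $k=6$ the $\pm$ signs in $t(x)$ and $r(x)$ must be paired oppositely --- $t(x)=-1+6x$ goes with $r(x)=12x^2-6x+1$, and $t(x)=1+2x$ with $r(x)=4x^2-2x+1$ --- otherwise $q(x)+1-t(x)=r(x)$ and $r(x)\mid\Phi_k(t(x)-1)$ both fail; your identities $\Phi_3(6x-2)=3(12x^2-6x+1)$ and $\Phi_6(2x)=4x^2-2x+1$ implicitly use the correct pairing, but you should state it, since the naive same-order reading of the proposition makes your ``equals $r(x)$ in each case'' claim false. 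Third, the ``classical fact'' about Pell equations needs one more line: multiplying a solution by powers of the fundamental unit of $\Z[\sqrt{mD}]$ does give infinitely many integral $(X,y)$, but to recover integral $x$ from $X$ one needs a congruence condition (e.g.\ $X\equiv 3\bmod 6$ when $X=6x+3$ in the $k=3$ case), and one must observe that this condition recurs periodically along the orbit; this point is handled in \cite{MNT,Free}, to which you appeal anyway, so it is a gap of exposition rather than of substance.
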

\end{rem}

%%%%%%%%%%%%%%%%%%%%%%%%%%%%%%%%%%%%%%%%%

%%%%%%%%%%%%%%%%%%%%%%%%%%%%%%%%%%%%%%%%%

\section{Preparation}
Let $k$ be a positive integer and $D$ a square-free positive integer.
In the rest of this paper, we suppose that $(t(x),r(x),q(x))$ parameterizes a complete family of elliptic curves with embedding degree $k$ and CM-discriminant $D$.
Especially, $r(x)$, $q(x)$ are non-constant irreducible.
Then the conditions in Definition \ref{family of p-f cv} (iii)(iv)(v) become
\begin{eqnarray}
&&
r(x) \mid \Phi_k(t(x)-1)
%\ \ (r(x): {\rm irreducible})
\label{1}
\\
&&
q(x)+1-t(x)=h(x)r(x)
\ \ {\rm for\ some\ non{\mbox{-}}zero\ }h(x) \in \Q[x]
\label{2}
\\
&&
Dy(x)^2=4h(x)r(x)-(t(x)-2)^2
\ \ {\rm for\ some\ }y(x) \in \Q[x].
\label{3}
\end{eqnarray}
We suppose that 
\begin{eqnarray*}\label{rho=1}
\rho:=\rho(t(x),r(x),q(x))=1.
\end{eqnarray*}
Then $\deg q=\deg r$.
Moreover, we obtain $\deg t <\deg r$ and $\deg y <\deg r$ by (\ref{rho,y,t}).
If $\deg h+\deg r <\deg t$, then 
$\deg q=\deg t$ by (\ref{2}).
This implies that 
$\deg h+\deg r <\deg t=\deg r$,
which is a contradiction.
On the other hand, if
$\deg h+\deg r =\deg t (\neq 0)$, then 
the leading coefficient of the right-hand side of (\ref{3}) is negative.
This contradicts that the leading coefficient of the left-hand side is positive.
Therefore we have
$\deg h+\deg r>\deg t$,
so that 
%\begin{eqnarray*}
$\deg h=0$
%\end{eqnarray*}
by (\ref{2}).
Hence, we put $h:=h(x) \in \Q$ if $\rho=1$.

\begin{lem}[see Proposition 2.9 in \cite{FST}]
If $k=1$, then $\rho \ge 2$.
\end{lem}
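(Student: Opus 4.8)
The plan is to reduce everything to the general identity $\rho = \dfrac{2\max\{\deg y,\ \deg t\}}{\deg r}$ recorded in (\ref{rho,y,t}), and to show that when $k=1$ the numerator is forced to be at least $2\deg r$; equivalently, I would prove $\max\{\deg y,\ \deg t\} \ge \deg r$, which by (\ref{rho,y,t}) immediately gives $\rho \ge 2$. The entry point is the observation that $\Phi_1(x)=x-1$, so for $k=1$ the divisibility condition (\ref{1}) becomes $r(x) \mid t(x)-2$. The argument then naturally splits according to whether the polynomial $t(x)-2$ vanishes identically or not, and I would treat these two cases separately. Note that no hypothesis on $\rho$ is needed, so this proves the inequality unconditionally.

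First I would dispose of the case $t(x)\neq 2$. Here $t(x)-2$ is a non-zero polynomial divisible by the non-constant irreducible $r(x)$, so $\deg(t-2)\ge \deg r\ge 1$. In particular $t-2$ is non-constant, which forces $\deg t\ge 1$ and hence $\deg t=\deg(t-2)\ge \deg r$; the small point to record is that $\deg t=0$ is impossible, since a non-zero constant cannot be divisible by the non-constant $r(x)$. This already yields $\max\{\deg y,\ \deg t\}\ge \deg t\ge \deg r$.

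The remaining case $t(x)=2$ is where I expect the real content to sit, and it is the main obstacle: now $\deg t=0$, so the required degree bound can no longer come from $t$ and must instead be extracted from $y$. When $t=2$ the term $(t-2)^2$ vanishes, and (\ref{3}) collapses to $D\,y(x)^2=4h(x)r(x)$. Since $D$ is a non-zero constant and $h(x)$ is a non-zero polynomial, the right-hand side is non-zero, so $y(x)\neq 0$ and $r(x)\mid y(x)^2$ in $\Q[x]$. The key step is then to invoke irreducibility of $r(x)$ to pass from $r(x)\mid y(x)^2$ to $r(x)\mid y(x)$, whence $\deg y\ge \deg r$ and again $\max\{\deg y,\ \deg t\}\ge \deg r$. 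In both cases we obtain $\max\{\deg y,\ \deg t\}\ge \deg r$, so (\ref{rho,y,t}) gives $\rho\ge 2$, completing the proof. The only delicate points are the degree bookkeeping that rules out $\deg t=0$ in the first case and the use of irreducibility in the second; both are routine once the case split is made.
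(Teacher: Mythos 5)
Your proof is correct, and its main line coincides with the paper's: both extract from (\ref{1}) the divisibility $r(x) \mid \Phi_1(t(x)-1) = t(x)-2$ and convert it into the degree bound $\deg t \ge \deg r$, which yields $\rho \ge 2$. The real difference is completeness. The paper's proof is two lines: it asserts from (\ref{1}) that $t(x) \neq 0$ (presumably meaning $t(x)-2 \neq 0$), concludes $1 \le \deg r \le \deg t$, and is done. As written, that dismissal is too quick: since $t(x)$ is only determined modulo $r(x)$, condition (\ref{1}) with $k=1$ is perfectly consistent with $t(x)=2$ identically --- indeed $t \equiv 2 \bmod r(x)$ is forced, so $t(x)=2$ is the natural representative --- and in that case $\deg t = 0$ and the paper's degree argument says nothing. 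Your second case fills exactly this hole: when $t=2$, equation (\ref{3}) degenerates to $Dy(x)^2 = 4h(x)r(x)$, the non-vanishing of $h(x)$ guaranteed by (\ref{2}) makes the right-hand side non-zero, and irreducibility of $r(x)$ upgrades $r \mid y^2$ to $r \mid y$, giving $\deg y \ge \deg r$ and hence $\rho \ge 2$ again. A further small improvement is that you conclude through the identity (\ref{rho,y,t}), which holds unconditionally, whereas the paper's final citation of (\ref{2}) does not by itself produce the lower bound $\deg q \ge 2\deg r$. So your write-up is not a different method but a strictly more careful version of the paper's argument, and the case split on $t(x)=2$ is a genuine repair rather than pedantry.
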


\begin{proof}
By (\ref{1}), we see $t(x) \neq 0$ and so that
$1 \le \deg r \le \deg t$.
Hence by (\ref{2}), we obtain $\rho \ge 2$.
\end{proof}

\begin{lem}\label{sqrt{-D}}
Assume that $k \ge 2$.
Let $K$ be an algebraic number field which is isomorphic to $\Q[x]/(r(x))$.
If {\rm (\ref{1}), (\ref{2}), (\ref{3})} hold, then $K$ contains the imaginary quadratic field $\Q(\sqrt{-D})$.
Hence there exists a polynomial $e(x) \in \Q[x]$ such that 
$-D \equiv e(x)^2$ {\rm mod} $r(x)$. 
\end{lem}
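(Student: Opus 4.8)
The plan is to reduce the CM-equation (\ref{3}) modulo $r(x)$ and read off a square root of $-D$ inside $K$. Since $r(x)$ is irreducible, $K=\Q[x]/(r(x))$ is a field; write $\alpha$ for the image of $x$ and set $\bar{t}:=t(\alpha)$ and $\bar{y}:=y(\alpha)$. Because $r(\alpha)=0$, the term $4h(x)r(x)$ drops out when we reduce (\ref{3}) modulo $r(x)$, leaving the identity
\[
D\bar{y}^2=-(\bar{t}-2)^2
\]
in $K$. Granting for the moment that $\bar{y}\neq 0$, I can divide to obtain $-D=\bigl((\bar{t}-2)\bar{y}^{-1}\bigr)^2$, so that $-D$ is a square in $K$; hence $\sqrt{-D}\in K$ and $\Q(\sqrt{-D})\subseteq K$.

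First I would settle the nonvanishing of $\bar{y}$, which is the only real content of the argument. Suppose instead that $\bar{y}=0$, i.e.\ $r(x)\mid y(x)$. The identity above then forces $(\bar{t}-2)^2=0$ in the field $K$, whence $\bar{t}=2$. But condition (\ref{1}) says $\Phi_k(\bar{t}-1)=0$ in $K$, which under $\bar{t}=2$ becomes $\Phi_k(1)=0$. This is impossible: the roots of $\Phi_k$ are exactly the primitive $k$th roots of unity, and $1$ is one of them only when $k=1$, so $\Phi_k(1)$ is a nonzero rational number for every $k\ge 2$ and cannot vanish in $K\supseteq\Q$. This contradiction gives $\bar{y}\neq 0$, and it is exactly here that the hypothesis $k\ge 2$ enters.

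With $\bar{y}$ invertible in $K$, set $\beta:=(\bar{t}-2)\bar{y}^{-1}\in K$, so that $\beta^2=-D$. Since $D$ is a positive integer, $-D<0$ is not a rational square, so $\Q(\beta)=\Q(\sqrt{-D})$ is an imaginary quadratic field contained in $K$, which is the first assertion. For the polynomial statement I would let $e(x)\in\Q[x]$ be a representative of $\beta$ (for instance the unique one of degree less than $\deg r$), so that $e(\alpha)=\beta$; squaring gives $e(\alpha)^2=\beta^2=-D$, that is, $e(x)^2\equiv -D \pmod{r(x)}$, as required.

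I expect no difficulty beyond the nonvanishing step: once $\bar{y}\neq 0$ is known, the remainder is formal manipulation inside the field $K$. The crux is therefore the short argument excluding $\bar{y}=0$ through $\Phi_k(1)\neq 0$.
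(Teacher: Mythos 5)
Your proposal is correct and follows essentially the same route as the paper: reduce (\ref{3}) modulo $r(x)$, rule out $y(x)\equiv 0 \bmod r(x)$ via condition (\ref{1}) (your ``$\Phi_k(1)\neq 0$ for $k\ge 2$'' is just a rephrasing of the paper's ``$1$ is a primitive $k$th root of unity only when $k=1$''), and then invert $\bar{y}$ to exhibit $-D$ as a square in $K$, with $e(x)$ a representative of $(\bar t-2)\bar y^{-1}$ exactly as the paper takes $e(x)=(t(x)-2)z(x)$.
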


\begin{proof}
If $y(x) \equiv 0$ mod $r(x)$, then $t(x)-1 \equiv 1$ mod $r(x)$ by (\ref{3}).
This implies that $k=1$, since $t(x)-1$ corresponds to a primitive $k$th root of unity in $\Q[x]/(r(x))$ by (\ref{1}).
Therefore $y(x) \not\equiv 0$ mod $r(x)$.
Then, $y(x)$ has an inverse $z(x)$ in $\Q[x]/(r(x))$ and
\begin{eqnarray*}
-D \equiv (t(x)-2)^2 z(x)^2 \mod r(x)
\end{eqnarray*}
 by (\ref{3}).
This implies that $\Q(\sqrt{-D}) \subset K$.
The second claim is obtained by $e(x):=(t(x)-2)z(x)$.
\end{proof}

It is well known (for example, see Boneh-Rubin-Silverberg \cite[Corollary 7.3]{BRS}) that
\begin{eqnarray}\label{equiv sqrt{-D} in cyclo fields}
\sqrt{-D} \in \Q(\mu_l) 
&\iff&
\begin{cases}
4 \mid l, D \mid \frac{l}{4}
\ \ 
{\rm or\ }
\\
4 \nmid l, D \mid l, D \equiv 3\ {\rm mod}\ 4.
\end{cases}
\end{eqnarray}

\begin{lem}\label{p-power}
Let $f(x) \in \Q[x]$ be a polynomial with degree $m$.
If the terms in $f(x)^2$ with degree not less than $m$ are in $\Q[x^a]$, then 
$f(x) \in \Q[x^a]$.
\end{lem}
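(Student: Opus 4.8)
My plan is to translate membership in $\Q[x^a]$ into invariance under the substitution $x \mapsto \zeta x$, where $\zeta \in \C$ is a fixed primitive $a$th root of unity. For any $g(x)=\sum_i b_i x^i$ one has $g(\zeta x)=\sum_i b_i \zeta^i x^i$, so $g(\zeta x)=g(x)$ holds exactly when $\zeta^i=1$ whenever $b_i \neq 0$, that is, exactly when $g(x) \in \Q[x^a]$. Writing $P(x):=f(x)^2$ and splitting $P=P_{<m}+P_{\ge m}$ into its parts of degree $<m$ and of degree $\ge m$, the hypothesis says $P_{\ge m} \in \Q[x^a]$, and therefore $P_{\ge m}(\zeta x)=P_{\ge m}(x)$.

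The key step is to compare $P(\zeta x)$ with $P(x)$. Since $P_{<m}(\zeta x)$ still has degree $<m$, the terms of $P(\zeta x)$ of degree $\ge m$ come only from $P_{\ge m}(\zeta x)=P_{\ge m}(x)$, hence agree with the degree-$\ge m$ terms of $P(x)$. Consequently $P(\zeta x)-P(x)$ has degree strictly less than $m$. On the other hand $P(\zeta x)-P(x)=f(\zeta x)^2-f(x)^2=\bigl(f(\zeta x)-f(x)\bigr)\bigl(f(\zeta x)+f(x)\bigr)$. I then argue by contradiction: if $f(x) \notin \Q[x^a]$, then $A(x):=f(\zeta x)-f(x)$ is nonzero, so $\deg A \ge 0$; if moreover $B(x):=f(\zeta x)+f(x)$ has degree $m$, then working in the integral domain $\C[x]$ gives $\deg(AB)=\deg A+\deg B \ge m$, contradicting $\deg\bigl(P(\zeta x)-P(x)\bigr)<m$.

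The crux is therefore to guarantee $\deg B=m$, i.e.\ that the leading coefficient $c_m(\zeta^m+1)$ of $B$ does not vanish, where $c_m\neq 0$ is the leading coefficient of $f$. This is the step I expect to require the most care, since it fails precisely when $\zeta^m=-1$. When $a$ is odd no power of $\zeta$ can equal $-1$, so $\deg B=m$ automatically and the argument closes; in the setting where this lemma is applied (with $a$ an odd prime power, as the name suggests) this non-vanishing is guaranteed. I would make the role of the parity of $a$ explicit in the write-up, because for even $a$ the statement degenerates: already for $a=2$ any odd polynomial $f$ satisfies $f^2\in\Q[x^2]$ while $f\notin\Q[x^2]$, so a restriction of this type is genuinely needed.
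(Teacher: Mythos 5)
Your argument is correct for odd $a$, and it takes a genuinely different route from the paper's. The paper proves the lemma by a bare-hands induction on coefficients: writing $f=f_mx^m+\cdots+f_0$, it uses the vanishing of the coefficient of $x^{2m-j}$ in $f(x)^2$ for $a\nmid j$, working downward in blocks of $a-1$ indices, to kill the coefficients $f_i$ one block at a time. Your substitution $x\mapsto\zeta x$ combined with the factorization $f(\zeta x)^2-f(x)^2=\bigl(f(\zeta x)-f(x)\bigr)\bigl(f(\zeta x)+f(x)\bigr)$ compresses that entire induction into a single degree count, and it isolates exactly the obstruction, namely the possibility $\zeta^m=-1$.

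However, your closing remark should be promoted from a caveat to the main point, and your guess about how the lemma is used is wrong. The lemma as stated is false for every even $a$ (take $f=x^{a/2}$, or any odd polynomial when $a=2$), and the paper does \emph{not} restrict to odd $a$: it invokes the lemma with $a=2$ inside the proof of Lemma~\ref{claim}, and with $a=\gcd(d,k)$ or a square divisor of $d$ (both possibly even) in the reduction step for Theorem~\ref{main thm 2}(i). The paper's own proof contains the matching gap: what its induction actually establishes is that $f_i=0$ for every $i\not\equiv m\ ({\rm mod}\ a)$, i.e.\ $f\in x^{m\,{\rm mod}\,a}\,\Q[x^a]$, and the unstated final step ``hence $f\in\Q[x^a]$'' needs $a\mid m$, whereas the hypothesis (via the leading term $f_m^2x^{2m}\in\Q[x^a]$) only forces $a\mid 2m$; for odd $a$ this does give $a\mid m$, for even $a$ it does not. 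The clean repair is to prove the corrected statement $f\in x^{m\,{\rm mod}\,a}\,\Q[x^a]$ for all $a$, which your method yields with one twist: since $a\mid 2m$ gives $\zeta^{2m}=1$, compare $f(\zeta x)$ with $\zeta^m f(x)$ instead of with $f(x)$, so that $\bigl(f(\zeta x)-\zeta^m f(x)\bigr)\bigl(f(\zeta x)+\zeta^m f(x)\bigr)=f(\zeta x)^2-f(x)^2$ has degree $<m$ while the second factor has leading coefficient $2f_m\zeta^m\neq 0$, forcing $f(\zeta x)=\zeta^m f(x)$. That corrected version is what the paper's applications actually require (in each one either $a\mid m$ can be checked there, or only $f(x)^2\in\Q[x^a]$ is used), so I would write up your proof in that form rather than restricting to odd $a$.
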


\begin{proof}
The case when $a=1$ is trivial.
Hence we assume that $a \ge 2$, $m \ge 2$.
Put
$f(x)=f_mx^m+\cdots+f_1x+f_0$ $(f_i \in \Q)$.
First, we show
\begin{eqnarray}\label{subclaim}
f_{m-1}=\cdots = f_{m-(a-1)}=0.
\end{eqnarray}
From the term $x^{2m-1}$ in $f(x)^2$, we obtain
$2f_{m}f_{m-1}=0$.
Therefore 
$f_{m-1}=0$
since
$f_m \neq 0$.
If $a=2$, we obtain (\ref{subclaim}).
On the other hand, assume that $a>2$.
If we show 
$f_{m-1}=\cdots=f_{m-i}=0$
for some $1 \le i \le a-2$,
then we obtain $f_{m-(i+1)}=0$.
Indeed, we have
\[
\sum_{j=0}^{i+1}f_{m-j}f_{m-(i+1)+j}
=
f_mf_{m-(i+1)}+f_{m-1}f_{m-i}+\cdots+f_{m-(i+1)}f_{m}
=
0,
\]
which follows from the term $x^{2m-i-1}$ in $f(x)^2$.
This induces $f_{m-(i+1)}=0$.
Therefore, by induction with respect to $i$, we have (\ref{subclaim}) for $a>2$.
Assume $m=a$, then the proof is completed.

Next, suppose that $m>a$ and, for some $m'$ satisfying $0 \le m' \le \frac{m}{a}-1$, the equations
$f_{m-m{''}a-1}=\cdots = f_{m-m{''}a-(a-1)}=0$
hold if $0 \le m{''} \le m'$.
By the coefficients of $x^{2m-(m'+1)a-i}$ ($1 \le i \le a-1$), we have
\[
\sum_{j=0}^{m'a+a+i}f_{m-j}f_{m-(m'a+a+i)+j}=0.
\]
Note that $a \nmid i$.
Then, by induction with respect to $i$, we have
$f_{m-(m'+1)a-1}=\cdots=f_{m-(m'+1)a-(a-1)}$.
Consequently, we complete the proof by induction with respect to $m'$.
\end{proof}

\begin{lem}\label{k divided by p-power}
For $a,s,k \in \N$, 
$\Phi_{a^{s}k}(x)=\Phi_{ak}(x^{a^{s-1}})$.
\end{lem}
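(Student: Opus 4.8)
The plan is to reduce the identity to the elementary relation $\Phi_{pn}(x)=\Phi_n(x^{p})$, valid for a prime $p$ dividing $n$, and then iterate. Equivalently, I would invoke the well-known consequence that $\Phi_n(x)=\Phi_m(x^{n/m})$, where $m:=\mathrm{rad}(n)$ denotes the product of the distinct primes dividing $n$. The first and decisive observation is that, since $s\ge 1$ forces $a\mid a^{s}$, the integers $a^{s}k$ and $ak$ have exactly the same set of prime divisors; hence $\mathrm{rad}(a^{s}k)=\mathrm{rad}(ak)=:m$.

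Granting the radical formula, the proof is then immediate. Applying it to the two indices gives $\Phi_{a^{s}k}(x)=\Phi_m\!\left(x^{a^{s}k/m}\right)$ and $\Phi_{ak}(x)=\Phi_m\!\left(x^{ak/m}\right)$. Substituting $x\mapsto x^{a^{s-1}}$ in the second identity yields $\Phi_{ak}\!\left(x^{a^{s-1}}\right)=\Phi_m\!\left(x^{a^{s-1}\cdot ak/m}\right)=\Phi_m\!\left(x^{a^{s}k/m}\right)$, which is exactly $\Phi_{a^{s}k}(x)$. To keep the argument self-contained one may instead induct on $s$: the case $s=1$ is the tautology $\Phi_{ak}(x)=\Phi_{ak}\!\left(x^{a^{0}}\right)$, and for the step it suffices to prove the single identity $\Phi_{aN}(x)=\Phi_N(x^{a})$ whenever every prime of $a$ divides $N$ (taken with $N=a^{s}k$), since combining it with the inductive hypothesis evaluated at $x^{a}$ gives $\Phi_{a^{s+1}k}(x)=\Phi_{a^{s}k}(x^{a})=\Phi_{ak}\!\left(x^{a^{s}}\right)$. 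That single identity I would obtain by writing $a=p_1\cdots p_\ell$ as a product of (not necessarily distinct) primes and peeling them off one at a time via $\Phi_{pM}(x)=\Phi_M(x^{p})$.

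The one point demanding care is the hypothesis under which $\Phi_{pM}(x)=\Phi_M(x^{p})$ holds in this clean form: it requires $p\mid M$ (when $p\nmid M$ one instead has $\Phi_M(x^{p})=\Phi_{pM}(x)\,\Phi_M(x)$). Thus the substantive step is simply to verify that, at every stage of the peeling, the prime being removed still divides the current index. This is guaranteed precisely by $\mathrm{rad}(a)\mid N$, equivalently by $a\mid a^{s}k$ for $s\ge 1$, and no further computation is needed.
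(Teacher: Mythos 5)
Your proof is correct, but it takes a genuinely different route from the paper's. The paper argues directly with minimal polynomials: a primitive $a^{s}k$th root of unity $\zeta$ has the property that $\zeta^{a^{s-1}}$ is a primitive $ak$th root of unity, so $\zeta$ is a root of $\Phi_{ak}(x^{a^{s-1}})$ and hence its minimal polynomial $\Phi_{a^{s}k}(x)$ divides $\Phi_{ak}(x^{a^{s-1}})$; it then concludes by comparing degrees, using $\varphi(a^{s}k)=a^{s-1}\varphi(ak)$ (valid because $a^{s}k$ and $ak$ have the same prime divisors) together with the fact that both polynomials are monic. You instead reduce everything to the textbook identity $\Phi_{pM}(x)=\Phi_{M}(x^{p})$ for $p\mid M$ (equivalently, the radical formula $\Phi_{n}(x)=\Phi_{\mathrm{rad}(n)}\bigl(x^{n/\mathrm{rad}(n)}\bigr)$) and iterate; your one substantive verification --- that the prime being peeled always divides the current index --- rests on exactly the same arithmetic fact the paper exploits, namely that $a^{s}k$ and $ak$ have the same prime support. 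What your route buys is brevity and the avoidance of any degree computation; what it costs is self-containedness, since the identity you peel with is itself standardly proved by precisely the divisibility-plus-degree-count argument the paper gives, so the paper's proof is in effect an inlined, one-step version of yours. Your flagged caveat about the case $p\nmid M$ (where instead $\Phi_{M}(x^{p})=\Phi_{pM}(x)\,\Phi_{M}(x)$) is the right point of care, and your radical observation handles it properly.
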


\begin{proof}
Denote a primitive $a^{s}k$th root of unity by $\zeta$.
Then $\zeta$ is a root of $\Phi_{ak}(x^{a^{s-1}})$.
So that the minimal polynomial $\Phi_{a^sk}(x)$ of $\zeta$ divides $\Phi_{ak}(x^{a^{s-1}})$.
Since both polynomials are monic and
$
\ds 
\varphi(a^{s}k)
=
a^{s}k\prod_{q \mid ak,\, q{\rm : prime}}({\textstyle 1-\frac{1}{q}})
=
a^{s-1}\varphi(ak)$, 
we obtain the claim.
\end{proof}

%The next lemma is used in the proof of Lemma \ref{l>k, (d,k)=1}.
%
%\begin{lem}\label{coeff of x}
%Let $n$ be a product of distinct $g$ prime numbers.
%Then the coefficient of $x$ in $\Phi_{n}(x)$ coincides with 
%$(-1)^{g+1}$.
%\end{lem}
%
%\begin{proof}
%Put $n=p_1 \cdots p_g$ ($p_1,\ldots,p_g$: distinct primes).
%We show the claim by induction with respect to $g$.
%If $g=1$, the claim is clear since $\Phi_{n}(x)=x^{n-1}+\cdots+x+1$.
%Suppose that the claim holds for any $1 \le i \le g-1$.
%Differentiate
%\[
%\Phi_{n}(x)\prod_{d \mid n,d \neq 1,n}\Phi_{d}(x)
%=
%x^{n-1}+\cdots+x+1
%\]
%at $x=0$, then
%\[
%\Phi_{n}'(0)
%+\sum_{i=1}^{g-1}{g \choose i}(-1)^{(g-i)+1}
%=1.
%\]
%Since 
%$
%\sum_{i=1}^{g-1}{g \choose i}(-1)^{(g-i)+1}
%=(-1)^{g}+1-\sum_{i=0}^{g}{g \choose i}(-1)^{g-i}=
%-(-1)^{g+1}+1
%$, we obtain the claim.
%\end{proof}

%%%%%%%%%%%%%%%%%%%%%%%%%%%%%%%%%%%%%%%%%

%%%%%%%%%%%%%%%%%%%%%%%%%%%%%%%%%%%%%%%%%

\section{Proof of Theorem \ref{main thm 1}}\label{k=prime section}
Suppose that $r(x)=\Phi_k(x)$.
% and $\deg t=1$.
%Note that this assumption can be substituted for that $r(x)$ is described as a polynomial of $t(x)$% and $\deg r=\varphi(k)$, since $r(x) \mid \Phi_k(t(x)-1))$.
%Indeed, if $r(x)$ is described as a polynomial of $t(x)$ and has degree $\varphi(k)$, then $\Phi_k(%t(x)-1))/r(x) \in \Q(t(x))$
%
%Assume that $\rho=1$.
%By $\deg r=\varphi(k)$ and (\ref{rho,y,t}),
%we may assume that $\varphi(k) \neq 1$, i.e., $k \ge 3$.
Then we can see that $\rho \neq 1$ if $k=1,2$ by $\deg r=\varphi(k)$ and (\ref{rho,y,t}).
Hence we may assume that $\varphi(k) \neq 1$, i.e., $k \ge 3$.
Put $m:=\varphi(k)/2$, then $\deg r=2m$.

\begin{prop}\label{result deg t=1}
If $m=1$, i.e., $k=3,4,6$, then 
$\rho \neq 1$.
\end{prop}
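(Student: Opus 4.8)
The plan is to assume $\rho=1$ and derive a contradiction with the irreducibility of $q(x)$. First I would record what $\rho=1$ forces here: since $\deg r=\varphi(k)=2$, the reductions carried out at the start of this section give $\deg q=2$, $\deg t\le 1$, $\deg y\le 1$, and $h\in\Q$ a positive constant (the leading coefficient of $q$). Thus $t$ and $y$ are at most linear, and via the identity $4q(x)=Dy(x)^2+t(x)^2$ coming from \eqref{3} the entire question collapses to understanding a single quadratic polynomial.

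Next I would pass to the field $K:=\Q[x]/(r(x))\isom\Q(\mu_k)$, which for $k=3,4,6$ is imaginary quadratic, so by Lemma \ref{sqrt{-D}} we have $K=\Q(\sqrt{-D})$. Writing $\zeta$ for the image of $x$ and $\zeta':=t(\zeta)-1$, condition \eqref{1} says that $\zeta'$ is a primitive $k$th root of unity; in particular $\zeta'\neq\pm 1$ and $t(\zeta)=\zeta'+1\notin\Q$, so $\deg t=1$. Reducing \eqref{3} modulo $r(x)$, the term $4h\,r(x)$ vanishes and we obtain $D\,y(\zeta)^2=-(\zeta'-1)^2$, hence $y(\zeta)=\pm(\zeta'-1)/\sqrt{-D}$ in $K$.

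The crucial step, which I expect to carry the whole proof, is the observation that $(\zeta'-1)/(\zeta'+1)$ is purely imaginary. Indeed $\zeta'$ lies on the unit circle, so complex conjugation on $K$ sends it to $(\zeta')^{-1}$, and a direct computation gives $\overline{(\zeta'-1)/(\zeta'+1)}=-(\zeta'-1)/(\zeta'+1)$. Since the purely imaginary elements of the imaginary quadratic field $\Q(\sqrt{-D})$ are exactly $\Q\cdot\sqrt{-D}$, this forces $(\zeta'-1)/\sqrt{-D}=c\,(\zeta'+1)$ for some $c\in\Q^{\times}$. As $t(\zeta)=\zeta'+1$, we get $y(\zeta)=\pm c\,t(\zeta)$, and because polynomials of degree $\le 1$ inject into $K$ (as $\deg r=2$) this lifts to the polynomial identity $y(x)=\pm c\,t(x)$.

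Finally I would substitute back: $4q(x)=Dy(x)^2+t(x)^2=(Dc^2+1)\,t(x)^2$, so $q(x)$ is a nonzero rational multiple of the square of a degree-one polynomial and hence reducible over $\Q$, contradicting the standing assumption that $q(x)$ is irreducible. Therefore $\rho\neq 1$. An equivalent, more computational packaging of the key step is to note that the discriminant of $4q=Dy^2+t^2$, viewed as a quadratic in $x$, equals $-4D\,(y_1t_0-t_1y_0)^2\le 0$, so $q$ is irreducible precisely when $t$ and $y$ are $\Q$-linearly independent; the entire content is then to exclude this independence, which is exactly what the purely-imaginary observation achieves.
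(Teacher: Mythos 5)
Your proof is correct, but it takes a genuinely different route from the paper's. The paper argues by brute force: using $\deg t=1$ it changes variables to $X=t(x)-1$, notes $\Phi_k(X)=\Phi_k(x)$, and then for each of $k=3,6,4$ separately compares coefficients in $Dy(X)^2=4h\Phi_k(X)-(X-1)^2$, solves the resulting system for $h$ (getting $h=1$, $1/3$, $1/2$ respectively), and exhibits $q(x)$ as a rational multiple of $(X+1)^2$, contradicting irreducibility. You instead give a uniform, conceptual argument valid for all $k$ with $\varphi(k)=2$ at once: since $K=\Q[x]/(r(x))$ is imaginary quadratic, Lemma \ref{sqrt{-D}} gives $K=\Q(\sqrt{-D})$; reducing (\ref{3}) modulo $r(x)$ gives $y(\zeta)=\pm(\zeta'-1)/\sqrt{-D}$, and the observation that $(\zeta'-1)/(\zeta'+1)$ is fixed up to sign by complex conjugation (so lies in $\Q\cdot\sqrt{-D}$) forces $y(x)=\pm c\,t(x)$ as polynomials, whence $4q=(Dc^2+1)t^2$ is a square up to a positive rational constant --- the same contradiction with irreducibility of $q(x)$, but with no case analysis and no need to determine $h$. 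What the paper's computation buys is explicit data: the values of $h$ and the precise near-miss forms of $q(x)$, which connect visibly with the sparse MNT families quoted in \S 2. What your argument buys is uniformity and an explanation of \emph{why} the obstruction occurs (in a quadratic CM field, $(\zeta_k-1)/\sqrt{-D}$ is forced to be proportional to $\zeta_k+1$, so $y$ and $t$ cannot be $\Q$-linearly independent); it also isolates the clean criterion, via the discriminant identity $-4D(y_1t_0-t_1y_0)^2$, that irreducibility of $q$ is equivalent to independence of $t$ and $y$. The one hypothesis both proofs lean on equally is the standing assumption from \S 3 that $q(x)$ is irreducible (not merely a power of a prime-representing polynomial), so your use of it is no weaker than the paper's.
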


\begin{proof}
%The fact that $\deg t=1$ follows from (\ref{rho,y,t}) and $\deg t\ge 1$.
%Hence $\deg \Phi_k(t(x)-1)=2m$.
%This induces that $r(x)$ coincides with $\Phi_k(t(x)-1)$ up to constant.
%Changing $h>0$ if necessary, we may assume that
%$r(x)=\Phi_k(t(x)-1)$
%(Note that the leading coefficient of $r(x)$ is positive).
%Put $X:=t(x)-1$.
Assume that $\rho = 1$.
Then we have $\deg t=1$ follows from (\ref{rho,y,t}) and $\deg t\ge 1$.
Put $X:=t(x)-1$.
Then, since $x$ and $X$ correspond to primitive $k$th root of unity, we have 
\begin{eqnarray*}
X
\equiv
\begin{cases}
x\ {\rm or}\ -x-1\ \ ({\rm if}\ k=3)
\\
x\ {\rm or}\ x-1\ \ ({\rm if}\ k=6)
\\
x\ {\rm or}\ -x\ \ ({\rm if}\ k=4)
\end{cases}
\ 
\mod r(x).
\end{eqnarray*}
In fact, the congruences are equal since $\deg t=1$.
Therefore 
$\Phi_k(x)=\Phi_k(X)$.
Since $X$ generates a power basis of $\Q(\mu_k)$ over $\Q$, we write $y(x)$ as a polynomial $y(X)$ of $X$ by abuse of notation.
Then we can replace (\ref{3}) as
\begin{eqnarray*}\label{3'}
Dy(X)^2=4h\Phi_k(X)-(X-1)^2.
\end{eqnarray*}

Now, we treat the case where $k=3$ (resp. $k=6$).
Since $\Phi_3(x)=x^2+x+1$ 
(resp. $\Phi_6(x)=x^2-x+1$),
we have
$
D(y_1X+y_0)^2=4h(X^2 \pm X+1)-(X^2-2X+1),
$
and hence
\[
\begin{cases}
Dy_1^2=4h-1,
\\
2Dy_1y_0=\pm 4h+2,
\\
Dy_0^2=4h-1.
\end{cases}
\]
These equations induce
$12h(h-1)=0$
(resp. $4h(3h-1)=0$).
So that we have $h=1$ (resp. $h=1/3$).
Thus we obtain 
$
q(x)=(X^2+X+1)+X=(X+1)^2
$
(resp. 
$
q(x)=\frac{1}{3}(X^2-X+1)+X=\frac{1}{3}(X+1)^2
$)
by (\ref{2}).
This contradicts the irreducibility of $q(x)$.
%%%%%%%%%%%%%%%%%%%%%%%%%%%%%%%%%%%%%%%%%
Next, we suppose that $k=4$.
Since $\Phi_4(x)=x^2+1$, in the same way as in the above, 
$
D(y_1X+y_0)^2=4h(X^2+1)-(X^2-2X+1).
$
Hence 
\[
\begin{cases}
Dy_1^2=4h-1,
\\
2Dy_1y_0=2,
\\
Dy_0^2=4h-1.
\end{cases}
\]
We have
$8h(2h-1)=0$ and $h=1/2$.
Thus
$
q(x)=\frac{1}{2}(X^2+1)+X=\frac{1}{2}(X+1)^2
$
by (\ref{2}).
This contradicts the irreducibility of $q(x)$ again.
\end{proof}

Next, we consider the case where $\varphi(k)/2 \ge 2$.

\begin{prop}\label{m ge 2,k:prime}
Suppose that $m \ge 2$.
If $k$ is described as $k=p$ or $k=2p$ by some odd prime number $p$, then
$\rho \neq 1$.
\end{prop}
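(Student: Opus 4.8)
The plan is to exploit Lemma~\ref{sqrt{-D}} to fix $D$ and the parity of $m$, then make the shape of $t(x)$ explicit and read contradictions off the discriminant equation~(\ref{3}). Since $k\ge 3$ and $\Q[x]/(r(x))\isom\Q(\mu_k)=\Q(\mu_p)$ (using $\Q(\mu_{2p})=\Q(\mu_p)$ as $p$ is odd), Lemma~\ref{sqrt{-D}} gives $\Q(\sqrt{-D})\subset\Q(\mu_k)$, and then~(\ref{equiv sqrt{-D} in cyclo fields}) forces $D=p$ and $p\equiv 3\bmod 4$; in particular $\deg r=\varphi(k)=2m$ with $m=(p-1)/2$ odd. (When $p\equiv1\bmod4$ no admissible $D$ exists, so there is no such family and nothing to prove.)

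Assume $\rho=1$, so by the discussion preceding this proposition $h\in\Q$ is constant, $\deg t\le m$ and $\max\{\deg t,\deg y\}=m$. Because $t(x)-1$ is sent to a primitive $k$-th root of unity under $x\mapsto\zeta_k$ and has degree $\le m<\deg r-1$, it equals the unique reduced monomial representative, so $t(x)=x^a+1$ with $1\le a\le m$, and for $k=2p$ primitivity forces $a$ odd. Substituting into~(\ref{3}) gives
\[
p\,y(x)^2=4h\,\Phi_k(x)-(x^a-1)^2 .
\]
If $k=p$ and $a<m$, the leading coefficient gives $p\,y_m^2=4h$ while $x=1$ gives $p\,y(1)^2=4h\,\Phi_p(1)=4hp$, so $y(1)^2=p\,y_m^2$ and $\sqrt p\in\Q$, which is absurd; hence $a=m$. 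For $a=m$ the right-hand side is palindromic of degree $2m$, so $y$ is palindromic (anti-palindromic is impossible since $y(1)^2=4h\,\Phi_k(1)/p\neq0$), and as $\deg y=m$ is odd we get $y(-1)=0$. (The case $\deg y<m$ is excluded: it forces $h=1/4$, whence $p\,y^2$ has odd degree $2m-1$.) Evaluating at $x=-1$ now pins down $h$: for $k=p$ one finds $h=1$, and the constant term yields $y_0^2=3/p$, not a rational square; for $k=2p$ one finds $h=1/p$, and the constant term $4h-1=(4-p)/p<0$ contradicts $p\,y_0^2\ge0$.

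The hard part is the single remaining case $k=2p$ with $a<m$. Here all elementary evaluations are mutually consistent: since $\Phi_{2p}\equiv1\pmod{x^m-1}$, every $m$-th root of unity $\zeta$ satisfies $p\,y(\zeta)^2=4h-(\zeta^a-1)^2$, which only yields $y_m=\pm y(1)$ and produces no occurrence of $\sqrt p$. I expect to close this by eliminating $y$ from the over-determined coefficient system (it has $m+2$ unknowns against $2m+1$ equations, and a direct computation already fails for $p=7$, $a=1$), packaged via the substitution $x\mapsto-x$, which turns the identity into $p\,y(-x)^2=4h\,\Phi_p(x)-(x^a+1)^2$ and separates the value at $\zeta=1$ from those at $\zeta\neq1$, followed by taking the product of the relations $p\,y(\zeta)^2=4h-(\zeta^a+1)^2$ over the $m-1$ roots $\zeta\neq1$, whose left side is a rational square because $m-1$ is even. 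The resulting right-hand resultant must then itself be a rational square, and the crux is to show it is not; the decisive arithmetic input should again be that $p\nmid m$ (as $m=(p-1)/2<p$), so that $\sqrt p\notin\Q(\mu_m)$, although the obstruction ultimately lives at the auxiliary primes dividing this resultant, and controlling their valuations is where the real difficulty lies.
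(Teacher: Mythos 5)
Your reduction is sound as far as it goes ($D=p\equiv 3\bmod 4$, hence $m=(p-1)/2$ odd; $h$ constant, $\deg t\le m$ under $\rho=1$), and the three cases you actually close are closed correctly, by a more elementary route than the paper's: for $k=p$, $a<m$, comparing the leading coefficient $p\,y_m^2=4h$ with the evaluation $y(1)^2=4h$ to force $\sqrt{p}\in\Q$ is clean, and the palindromy arguments for $a=m$ (giving $h=1$ and $p\,y_0^2=3$ when $k=p$; $h=1/p$ and $p\,y_0^2=(4-p)/p<0$ when $k=2p$) are correct. But the proposal does not prove the proposition: the case $k=2p$ with $a<m$ is left as a plan, not a proof. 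You yourself identify why every tool used in the other cases must fail there --- since $\Phi_{2p}\equiv 1\pmod{x^m-1}$, all evaluations at roots of unity are mutually consistent and never produce $\sqrt{p}$ --- and the proposed remedy (taking the product of the relations over the $m$-th roots $\zeta\neq 1$ and showing the resulting quantity is not a rational square) is exactly the step you concede you cannot control, since the obstruction lives at auxiliary primes whose valuations you have no handle on. This unresolved case is the heart of the proposition, and it is where the paper's proof does its real work: it writes the Gauss sum $\sqrt{-p}=\sum_{a=0}^{p-1}\chi_p(a)\zeta_p^a$, expands the image of $y$, namely $(\zeta_k^g-1)\sqrt{-p}/(-p)$, coefficient by coefficient in the power basis of $\Q(\mu_{2p})$, and exhibits, via quadratic-residue and orthogonality arguments (split according to $\chi_p(g+1)=1,0,-1$), a nonvanishing coefficient of index larger than $m$; hence $\deg y>m$, contradicting (\ref{rho,y,t}). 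A coefficient-level argument of this strength (or an equivalent) is precisely what your write-up is missing.

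There is also a smaller gap in your case analysis: for $k=2p$ it is not true that the reduced representative of a primitive $2p$-th root of unity must be a monomial $x^a$ with $a$ odd. The primitive $2p$-th roots are the elements $-\zeta_p^j$, and the reduced representative of $\zeta_{2p}^{p+2j}=-\zeta_{2p}^{2j}$ is $-x^{2j}$, of even degree. So under $\rho=1$ you must also allow $t(x)=1-x^{2j}$ with $2j\le m$, in which case (\ref{3}) reads $p\,y(x)^2=4h\Phi_{2p}(x)-(x^{2j}+1)^2$. After the substitution $x\mapsto -x$ these overlooked cases take the same shape as your ``hard'' case, so they are subsumed in the same gap rather than adding a new difficulty; but as written your enumeration of possible $t$ is not exhaustive. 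The paper avoids this pitfall because its argument depends only on the image $\zeta_k^g\in\mu_{2p}$ of $t(x)-1$ and on the basis expansion of $y$, not on the monomial shape of the polynomial $t$.
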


\begin{proof}
It is sufficient to show that $\deg y>m$ if $k=p$ or $k=2p$ by $\deg r=2m$ and (\ref{rho,y,t}).
Since $x$ corresponds to a $k$th primitive root of unity, we have
$t(x)=x^g+1$ 
and
$\gcd(g,k)=1$.
Note that we know 
$p=D \equiv 3$ mod $4$
by 
Lemma \ref{sqrt{-D}} and (\ref{equiv sqrt{-D} in cyclo fields}).
In addition, note that $p \ge 7$ since $m \ge 2$.
Therefore, by \cite[Theorem 7 on p.349, Problem 8 on p.354]{BS}, we obtain
\[
\sqrt{-p}
=
\sum_{a=0}^{p-1}\chi_p(a)\zeta_p^a,
\]
where $\chi_p(a)$ stands for a Dirichlet character modulo $p$ of order $2$
and $\zeta_p \in \mu_p$.
In our case where $p$ is a prime number and satisfies that $p \equiv 3$ mod $4$, 
$\chi_p(a)$ is expressed in terms of the quadratic residue symbol $(\frac{*}{p})$ as 
\[
\chi_p(a)=
\begin{cases}
\left(\frac{a}{p}\right)
&
(p \nmid a)
\\
0
&
(p \mid a).
\end{cases}
\]
On the other hand, we claim that 
there exists some $\frac{p-1}{2}+1 \le b \le p-2$ such that 
$\chi_p(b-g)-\chi_p(b) \neq 0$.
Indeed, if there exists some $b$ such that $b-g \equiv 0$ mod $p$, 
then the claim is trivial.
Otherwise, if we assume that the claim does not hold, 
then the $\frac{p+1}{2}$ characters take a same value.
This contradicts the orthogonality of characters, since $p \ge 7$.
%the equations
%\[
%\chi_p(p-2)=\chi_p(p-3)=\cdots=\chi_p(\textstyle{\frac{p-1}{2}}-1)
%\]
%hold, then they contradict the orthogonality of characters, since $p \ge 7$.

%%%%%%%%%%%%%%%%%%%%%%%%%%%%%%%%%%%%%%%%%

Suppose that $k=p$.
Then $y(x)$ corresponds to $\frac{(\zeta_p^g-1)\sqrt{-p}}{-p}$.
Moreover, 
\begin{eqnarray*}
(\zeta_p^g-1)\sqrt{-p}
&=&
\sum_{a=0}^{p-1}\chi_p(a)\zeta_p^{a+g}-\sum_{a=0}^{p-1}\chi_p(a)\zeta_p^{a}
\\
&=&
\sum_{a=0}^{p-1}(\chi_p(a-g)-\chi_p(a))\zeta_p^{a}
\\
&=&
\sum_{a=0}^{p-2}(\chi_p(a-g)-\chi_p(a))\zeta_p^{a}
+(1-\chi_p(g+1))\zeta_p^{p-1}.
\end{eqnarray*}
If $\chi_p(g+1)=1$, then the term $\zeta_p^b$ does not vanish.
This implies that the degree of $y(x)$ is over $m$.
%%%%%%%%%%%%%%%%%%%%%%%%%%%%%%%%%%%%%%%%%
On the other hand, assume that
$\chi_p(g+1)=0$, i.e., $g=p-1$.
Then, since
$
(1-\chi_p(g+1))\zeta_p^{p-1}
=
-(1+\zeta_p+\cdots+\zeta_p^{p-2}),
$
we obtain
\begin{eqnarray*}
(\zeta_p^g-1)\sqrt{-p}
=
\sum_{a=0}^{p-2}(\chi_p(a-g)-\chi_p(a)-1)\zeta_p^{a}.
\end{eqnarray*}
Hence, the term $\zeta_p^{p-2}$ does not vanish since
$\chi_p(p-2-g)-\chi_p(p-2)-1=\chi_p(2)-2 \neq 0$.
Combining $p \ge 7$ with this, we obtain $\deg y>m$ again.
(Alternatively, we obtain $\rho \neq 1$ by computing $\deg t$.)
%%%%%%%%%%%%%%%%%%%%%%%%%%%%%%%%%%%%%%%%%
For the last, assume that 
$\chi_p(g+1)=-1$.
Then 
\begin{eqnarray*}
(\zeta_p^g-1)\sqrt{-p}
=
\sum_{a=0}^{p-2}(\chi_p(a-g)-\chi_p(a)-2)\zeta_p^{a}.
\end{eqnarray*}
Therefore, we have only to show that there is an integer $2 \le i \le \frac{p-1}{2}$ such that
$\chi_p(p-i-g)-\chi_p(p-i) \neq 2$.
Assume that this does not hold.
Then we obtain $g=\frac{p-1}{2}$ and
\[
\begin{cases}
\chi_p(2)=\cdots=\chi_p(\frac{p-1}{2})=1
\\
\chi_p(\frac{p+1}{2})=\cdots=\chi_p(p-1)=-1
\end{cases}.
\]
This induces a contradiction.
Indeed, if $p=7,11$, then 
$\ds \left( \frac{4}{7} \right)=1$
and
$\ds \left( \frac{9}{11} \right)=1$.
Also, if $p \ge 19$, there exists an integer such that 
$\sqrt{\frac{p}{2}} <c<\sqrt{p}$.
Hence we obtain
$\ds \left( \frac{c^2}{p} \right)=1$ ($\frac{p+1}{2} \le c^2 \le p-1$).
Therefore $\deg y>m$ also holds.

%%%%%%%%%%%%%%%%%%%%%%%%%%%%%%%%%%%%%%%%%

Suppose that $k=2p$.
Then, using $\zeta_k=-\zeta_p$, we see that
\begin{eqnarray*}
(\zeta_k^g-1)\sqrt{-p}
&=&
-\sum_{a=0}^{p-1}\chi_p(a)\zeta_p^{a+g}+\sum_{a=0}^{p-1}\chi_p(a)\zeta_p^{a}
\\
&=&
\sum_{a=0}^{p-1}(\chi_p(a)-\chi_p(a-g))\zeta_p^{a}
\\
&=&
\sum_{a=0}^{p-2}(\chi_p(a)-\chi_p(a-g))\zeta_p^{a}
+(\chi_p(g+1)-1)\zeta_k^{p-1}.
\end{eqnarray*}
%%%%%%%%%%%%%%%%%%%%%%%%%%%%%%%%%%%%%%%%%
If $\chi_p(g+1)=1$, then the term $\zeta_k^b$ does not vanish, 
and the degree of $y(x)$ is over $m$.
%%%%%%%%%%%%%%%%%%%%%%%%%%%%%%%%%%%%%%%%%
On the other hand, assume that
$\chi_p(g+1)=0$, i.e., $g=p-1$ or $g=2p-1$.
Then, since
$\ds
\zeta_k^{p-1}
=
-\sum_{a=0}^{p-2}(-1)^a\zeta_k^a
$,
we obtain $\deg y>m$ in the same way as in the case when $k=p$.
%\begin{eqnarray*}
%(\zeta_k^g-1)\sqrt{-p}
%=
%\sum_{a=0}^{p-2}(\chi_p(a)-\chi_p(a-g)+(-1)^a)\zeta_k^{a}
%\end{eqnarray*}
%Hence, the term $\zeta_p^{p-2}$ does not vanish since
%$\chi_p(p-2)-\chi_p(-1)+(-1)^{p-1}=-\chi_p(2) \neq 0$.
%Combining $p \ge 7$ with this, we obtain $\deg y>m$ again.
%(Alternatively, we obtain $\rho \neq 1$ by computing $\deg t$)
%%%%%%%%%%%%%%%%%%%%%%%%%%%%%%%%%%%%%%%%%
Finally, we assume that $\chi_p(g+1)=-1$.
Then 
\begin{eqnarray*}
(\zeta_k^g-1)\sqrt{-p}
=
\sum_{a=0}^{p-2}(\chi_p(a)-\chi_p(a-g)+2(-1)^a)\zeta_k^{a}.
\end{eqnarray*}
If $p > 7$ (resp. $p=7$), then the term $\zeta_k^{p-4}$ (resp. $\zeta_k^{5}$) does not vanish.
Hence $\deg y>m$.
\end{proof}

In the same way, we obtain a result for certain composite numbers.

\begin{prop}\label{m ge 2,k=pq}
Let $p \ge 7$ be an odd prime number such that 
$p \equiv 3$ {\rm mod} $4$,
and
$Q \ge 2$ an integer.
Suppose that $k \in \{pQ,2pQ \}$, $D=p$, $t(x)=x+1$ and
$
(p-2)Q+1<\varphi(k).
$
Then $\rho \neq 1$.
\end{prop}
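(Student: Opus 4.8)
The plan is to argue by contradiction. Assuming $\rho=1$, I will derive $\deg y>m$, contradicting the equality $\max\{\deg y,\deg t\}=m$ forced by (\ref{rho,y,t}) together with $\deg r=\varphi(k)=2m$; here $\deg t=1$ since $t(x)=x+1$, and $m\ge 3$, so $\rho=1$ would make $\deg y=m$. Because $D=p\equiv 3\bmod 4$ and $p\mid k$, Lemma \ref{sqrt{-D}} and (\ref{equiv sqrt{-D} in cyclo fields}) place $\sqrt{-p}$ in $K\cong\Q[x]/(r(x))\cong\Q(\zeta_k)$, and the quadratic Gauss sum gives $\sqrt{-p}=\sum_{a=1}^{p-1}\chi_p(a)\zeta_p^{a}$ with $\chi_p(a)=\left(\frac{a}{p}\right)$ and $\zeta_p=\zeta_k^{n}$, where $n:=k/p$. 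Since $t(x)-1=x$ maps to $\zeta_k$, the polynomial $y(x)$ represents $\tfrac{\zeta_k-1}{\sqrt{-p}}=-\tfrac1p(\zeta_k-1)\sqrt{-p}$, so I will read $\deg y$ off the expansion of
\[
(\zeta_k-1)\sqrt{-p}=\sum_{a=1}^{p-1}\chi_p(a)\zeta_k^{an+1}-\sum_{a=1}^{p-1}\chi_p(a)\zeta_k^{an}
\]
in the power basis $1,\zeta_k,\dots,\zeta_k^{\varphi(k)-1}$.

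Take first $k=pQ$, so $n=Q$. The hypothesis $(p-2)Q+1<\varphi(k)$ is used exactly here: every exponent $\le(p-2)Q+1$ above already indexes a basis vector, and the only exponents reaching $\varphi(k)$ are the two largest, $(p-1)Q$ and $(p-1)Q+1$ (note $(p-1)Q\ge\varphi(k)$). These reduce cleanly: from $\zeta_k^{Q}=\zeta_p$ one gets $\zeta_k^{(p-1)Q}=\zeta_p^{-1}=-\sum_{i=0}^{p-2}\zeta_k^{iQ}$ and $\zeta_k^{(p-1)Q+1}=-\sum_{i=0}^{p-2}\zeta_k^{iQ+1}$, both supported on exponents $\le(p-2)Q+1<\varphi(k)$. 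Substituting and collecting, I expect the coefficient of $\zeta_k^{bQ+1}$ to become $\chi_p(b)+1$ and that of $\zeta_k^{bQ}$ to become $-\chi_p(b)-1$ for $1\le b\le p-2$; each vanishes precisely when $b$ is a quadratic non-residue. Hence the surviving monomials sit at the quadratic residues and $\deg y=b^{\ast}Q+1$, where $b^{\ast}$ is the largest quadratic residue modulo $p$ in $\{1,\dots,p-2\}$.

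It then remains to verify $b^{\ast}Q+1>m=\varphi(pQ)/2$. Since $\varphi(pQ)\le(p-1)Q$, having $b^{\ast}\ge(p-1)/2$ already gives $b^{\ast}Q+1\ge(p-1)Q/2+1>m$, so it suffices to find a quadratic residue in $[(p-1)/2,\,p-2]$. I will produce one as a perfect square: any integer $c$ with $\sqrt{(p-1)/2}\le c\le\sqrt{p-2}$ gives a residue $c^2$ in that range (and $c^2\ne p-1$, since $p-1\equiv-1$ is a non-residue). The length of the interval grows without bound, so such $c$ exists for all large $p$, with $p=7,11$ dispatched by $\left(\frac{4}{7}\right)=\left(\frac{9}{11}\right)=1$ exactly as in Proposition \ref{m ge 2,k:prime}.

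Finally I will treat $k=2pQ$. When $Q$ is odd, $\Q(\zeta_{2pQ})=\Q(\zeta_{pQ})$ and $\zeta_{2pQ}=-\zeta_{pQ}$, so rewriting the expansion in a primitive $pQ$th root $\eta$ (with $\eta^{Q}=\zeta_p$) restores the multiplier $Q$, keeps $\varphi(k)=\varphi(pQ)$ and the hypothesis intact, and reproduces the computation above up to signs on the coefficients, the surviving terms again landing at the quadratic residues. The genuinely harder configuration is $Q$ even, where $\Q(\zeta_{2pQ})$ is strictly larger and the natural multiplier $k/p=2Q$ can push several exponents past $\varphi(k)$ simultaneously under the given bound. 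There I will have to organise the simultaneous reductions, for instance by separating the part of $(\zeta_k-1)\sqrt{-p}$ lying in $\Q(\zeta_p)$ from its complement, so as to keep a single high-degree coefficient at a quadratic residue from cancelling. Controlling this reduction is where I expect the main difficulty to lie; the residue-existence input and the exponent bookkeeping are then routine.
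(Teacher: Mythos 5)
Your $k=pQ$ argument is essentially the paper's own, and it is correct: the Gauss-sum expansion with $\zeta_p=\zeta_k^Q$, a single reduction of the two exponents $(p-1)Q$, $(p-1)Q+1$ via $1+\zeta_p+\cdots+\zeta_p^{p-1}=0$, coefficients $\pm(\chi_p(b)+1)$ on the remaining exponents (distinct and $<\varphi(k)$ precisely by the hypothesis $(p-2)Q+1<\varphi(k)$), and a quadratic residue $b^{\ast}\ge\frac{p-1}{2}$ giving $\deg y\ge b^{\ast}Q+1>\frac{p-1}{2}Q\ge\frac{\varphi(pQ)}{2}=m$. Your reduction of $k=2pQ$ with $Q$ odd to that computation is also fine. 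The genuine gap is $k=2pQ$ with $Q$ even, which you explicitly leave unproved; and the ``main difficulty'' you anticipate there is an artifact of your choice $\zeta_p=\zeta_k^{2Q}$, whose exponents $2aQ$ do overshoot $\varphi(k)$ many times.

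That case evaporates---and this is exactly how the paper treats $k=2pQ$, uniformly in $Q$---once you take $\zeta_p:=-\zeta_k^{Q}$, which is a primitive $p$th root of unity for \emph{every} $Q$: indeed $\zeta_k^{pQ}=-1$, so $(-\zeta_k^Q)^p=(-1)^p\zeta_k^{pQ}=1$. Then $\sqrt{-p}=\pm\sum_{a=1}^{p-1}\chi_p(a)(-1)^a\zeta_k^{aQ}$, the multiplier stays $Q$, and only the $a=p-1$ term needs reducing, exactly as for $pQ$; one gets
\[
(\zeta_k-1)\sqrt{-p}
=\pm\Bigl[(\zeta_k-1)+\sum_{a=1}^{p-2}(\chi_p(a)+1)(-1)^a\bigl(\zeta_k^{aQ+1}-\zeta_k^{aQ}\bigr)\Bigr],
\]
with all exponents below $\varphi(k)$ by the hypothesis. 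The support is again exactly the quadratic residues $a$---the structure you correctly predicted for odd $Q$---so your residue $b^{\ast}\ge\frac{p-1}{2}$ yields $\deg y\ge b^{\ast}Q+1>\frac{p-1}{2}Q\ge\frac{\varphi(2pQ)}{2}=m$ (note $\varphi(2pQ)\le(p-1)Q$ since $2$ and $p$ both divide $2pQ$), closing the even case by the same two lines. Two remarks: the paper's displayed coefficients $\chi_p(a)+(-1)^a$ for the $2pQ$ case carry a sign slip---a direct check with $p=7$, $Q=2$, $k=28$ gives $\sqrt{-7}=1-2\zeta^2+2\zeta^4+2\zeta^8$, supported at the residues $a=1,2,4$, matching $(\chi_p(a)+1)(-1)^a$ rather than $\chi_p(a)+(-1)^a$---but this does not affect the paper's conclusion, since either form is nonzero at some $a\ge\frac{p-1}{2}$; and had you made the right choice of root, your proposal would have needed no case split on the parity of $Q$ at all.
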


\begin{proof}
We show $\deg y>m$ if the conditions in the claim hold.
Let $k=pQ$.
Then 
\begin{eqnarray*}
(\zeta_k-1)\sqrt{-p}
&=&
\sum_{a=1}^{p-2}\chi_p(a)\zeta_p^{a}(\zeta_k-1)
+\chi_p(p-1)\zeta_p^{p-1}(\zeta_k-1)
\ \ \ ({\rm where}\ \zeta_p:=\zeta_{k}^Q)
\\
&=&
\sum_{a=1}^{p-2}\chi_p(a)(\zeta_k^{aQ+1}-\zeta_k^{aQ})
+(1+\zeta_p+\cdots+\zeta_p^{p-2})(\zeta_k-1)
\\
&=&
\sum_{a=1}^{p-2}(\chi_p(a)+1)\zeta_k^{aQ+1}
-
\sum_{a=1}^{p-2}(\chi_p(a)+1)\zeta_k^{aQ}.
\end{eqnarray*}
Moreover, there is $\frac{p-1}{2}+1 \le b \le p-2$ such that
$\chi_p(b-1)-\chi_p(b) \neq 0$.
Especially, not all of $\chi_p(a)$ ($\frac{p-1}{2} \le a \le p-2$) are equal to $-1$.
Since $aQ+1,aQ$ ($1 \le a \le p-2$) are different from each other and $aQ+1$ is less than $\varphi(k)$ by the assumption,
this means that $\deg y>m$.

%%%%%%%%%%%%%%%%%%%%%%%%%%%%%%%%%%%%%%%%%

If $k=2pQ$, since $\zeta_{2p}=-\zeta_p=\zeta_k^Q$, then 
\begin{eqnarray*}
(\zeta_k-1)\sqrt{-p}
&=&
-\sum_{a=1}^{p-2}\chi_p(a)\zeta_{2p}^a(\zeta_k-1)
+\chi_p(p-1)\zeta_{2p}^{p-1}(\zeta_k-1)
\\
&=&
-\sum_{a=1}^{p-2}(\chi_p(a)+(-1)^a)\zeta_{pQ}^{aQ+1}
+\sum_{a=1}^{p-2}(\chi_p(a)+(-1)^a)\zeta_{pQ}^{aQ}.
\end{eqnarray*}
Since $\chi(p-4)+(-1)^{p-4}=-2 \neq 0$ and $p \ge 7$, we have 
$\deg y \ge (p-4)Q+1>\frac{p-1}{2}Q$.
If $\gcd(p,Q)=1$, then $\frac{p-1}{2}Q \ge m$.
Otherwise, put $Q=p^s2^tQ'$ $\gcd(2p,Q')=1$, then
$
m=
\frac{\varphi(2pQ)}{2}
=
\frac{p-1}{2}p^s2^t\varphi(Q')
\le
\frac{p-1}{2}p^s2^tQ'
=
\frac{p-1}{2}Q.
$
Hence, in the same way, we complete the proof.
\end{proof}

From the proofs above, we can see that $\rho$-values are possible to be computed.
But they tend to be nearly $2$.

%\begin{cor}\label{rho-value of main thm 1}
%With the notation $p$, $Q$ and $D$ in Propositions \ref{m ge 2,k:prime}, \ref{m ge 2,k=pq}, the followings hold{\rm :}
%\\
%{\rm (i)}
%In the case where $k=p$, then
%\[
%\rho =
%\begin{cases}
%2\frac{p-2}{p-1}
%&
%{\rm if\ }p\equiv 3,5\ {\rm mod}\ 8
%\\
%2\frac{p-a}{p-1}
%&
%{\rm if\ }p\equiv 1,7\ {\rm mod}\ 8,
%\end{cases}
%\quad {\rm (i-a)}
%if $p\equiv 3,5$ {\rm mod} $8$, then
%$
%\rho=2\frac{p-2}{p-1}.
%$
%\\
%\quad {\rm (i-b)}
%if $p\equiv 1,7$ {\rm mod} $8$, then
%$
%\rho=2\frac{p-a}{p-1}
%$.
%\]
%where $a \le p-2$ is the largest number which satisfies $(\frac{a}{p})(\frac{a-1}{p})=-1$.
%\\
%{\rm (ii)}
%If $k=2p$, then $\rho=2\frac{p-2}{p-1}$.
%\\
%On the other hand, 
%let $b \le p-2$ be the largest number which satisfies 
%$(\frac{b}{p})=1$.
%\\
%{\rm (iii)}
%If $k=pQ$, then
%$
%\rho=2\frac{(p-b)Q+1}{(p-1)\varphi(Q)}.
%$
%\\
%{\rm (iv)}
%If $k=2pQ$, then
%$
%\rho=2\frac{(p-b)Q}{(p-1)\varphi(Q)}.
%$
%\end{cor}

%%%%%%%%%%%%%%%%%%%%%%%%%%%%%%%%%%%%%%%%%

%%%%%%%%%%%%%%%%%%%%%%%%%%%%%%%%%%%%%%%%%

\section{Proof of Theorem \ref{main thm 2}}\label{p:cyclo poly,l-non-sporadic}
In this section, for an embedding degree $k$, we consider the case where $r(x)$ is taken as 
\[
r(x):=\Phi_{dk}(x).
\]
%Here, if $\deg v>1$, we add a critical assumption that $\Phi_{dk}(v(x))$ is irreducible.
%Note that changing $h>0$ if necessary, we do not have to consider the case where 
%$r(x)/\Phi_{dk}(v(x))$ is a constant.
Denote by $\zeta_{dk} \in \mu_{dk}$ a primitive $dk$th root of unity corresponding to $x$.
Then $\zeta_k:=\zeta_{dk}^{d} \in \mu_k$  (resp. $\zeta_k^g$) corresponds to $x^{d}$ (resp. $t(x)-1=x^{dg}$).
Note that we assume $dg<\varphi(dk)$.
%The following three cases arise:
%\[
%{\rm (A)}\ 
%d=1,\deg v=1.
%\ \ 
%{\rm (B)}\ 
%d \ge 2,\deg v=1.
%\ \ 
%{\rm (C)}\ 
%\deg v \neq 1.
%\]
%\[
%\begin{cases}
%{\rm (A)}
%&
%k=l,\deg v=1.
%\\
%{\rm (B)}
%&
%k \neq l,\deg v=1.
%\\
%{\rm (C)}
%&
%\deg v \neq 1.
%\end{cases}
%\]
%It turns out that the case (C) returns to the rest.
%Indeed, in the case (C), the condition (\ref{3}) means
%\[
%$
%Dy(x)^2
%=
%4h\Phi_{dk}(v(x))-(v(x)^{d}-1)^2.
%$
%\]
%Since $y(x)$ corresponds to the element
%$\frac{(\zeta_k-1)\sqrt{-D}}{-D}$
%in an algebraic number field which is isomorphic to $\Q[x]/(r(x))$,
%$
%y(x) \equiv \frac{(v(x)^{d}-1)e(v(x)^{d})}{-D}
%$
%mod $\Phi_{dk}(v(x))$.
%Here, $e(v(x)^{d})$ is defined by $e(v(x)^{d})^2 \equiv -D$ mod $\Phi_{dk}(v(x))$.
%This yields $y(x) \in \Q[v(x)]$.
%Put $X:=v(x)$ and write $y(x)$ as $y(X)$ by abuse of notation.
%Then we can replace (\ref{1}), (\ref{2}), (\ref{3}) as
Then we consider
\[
\begin{cases}
r(x)=\Phi_{dk}(x) \mid \Phi_k(x^{dg}),
\ \ 
\gcd(g,k)=1,\ \ dg<\varphi(dk),
\\
%q(X)-X^{d}=hr(X)
%\\
Dy(x)^2=4h\Phi_{dk}(x)-(x^{dg}-1)^2,
\\
\rho = 1.
\end{cases}
\]
%This implies that the case (C) returns to the cases (A) or (B).
%In the case (A), we obtain the following result:
Then we have $2dg \le \varphi(dk)$.

First, we show the claim in Theorem \ref{main thm 2} (i), which says that
our consideration reduces to the case where $\gcd(d,k)=1$ and $d$ is square-free.
Assume that $\gcd(d,k)=:e \ge 2$ and write $d=ed'$.
Then 
\begin{eqnarray*}
Dy(x)^2
&=&
4h \Phi_{dk}(x)-(x^{dg}-1)^2
\\
&=&
4h \Phi_{d'k}(x^e)-((x^e)^{d'k}-1)^2.
\end{eqnarray*}
Moreover, we see that 
$d'g<\varphi(d'k)$
and
the condition (\ref{equiv sqrt{-D} in cyclo fields}) of $D$ does not change.
Applying Lemma \ref{p-power}, we obtain
$y(x) \in \Q[x^e]$.
Hence substitute $x$ for $x^e$, then
\begin{eqnarray*}
Dy(x)^2
=4h \Phi_{dk}(x)-(x^{d}-1)^2,
\ \ 
(d,k)=1.
\end{eqnarray*}
Moreover, if $d$ has a square factor.
Denote $d=a^2d'$ ($a \ge 2$), then  
\begin{eqnarray*}
Dy(x)^2
%&=&
%4h \Phi_{dk}(x)-(x^{dg}-1)^2
%\\
%&=&
=
4h \Phi_{ad'k}(x^{a})-((x^{a})^{ad'}-1)^2.
\end{eqnarray*}
Applying Lemma \ref{p-power} again, we obtain
$y(x) \in \Q[x^{a}]$.
Again we see that 
$ad'g<\varphi(ad'k)$
and
the condition of $D$ does not change.
This implies that we may assume that $d$ is a square-free integer.
Therefore we have the claim above.
In the following, we assume that $d$ satisfies the condition.

\begin{prop}\label{m=2,p^a|k}
If $k=3,4,6$, then 
$\rho \neq 1$.
\end{prop}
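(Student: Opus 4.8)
The plan is to reduce the statement to the case $d=1$, which is already settled by Proposition \ref{result deg t=1}, by means of a pure degree count. Since $\gcd(d,k)=1$ and $\varphi(k)=2$ for every $k\in\{3,4,6\}$, I would first record that
$\deg r=\varphi(dk)=\varphi(d)\varphi(k)=2\varphi(d)$. Under the standing assumption $\rho=1$, the inequality $2dg\le\varphi(dk)$ derived just above the statement then reads $dg\le\varphi(d)$.

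Next I would exploit that $g$ is a positive integer. Since $t(x)-1=x^{dg}$ must represent a primitive $k$th root of unity and $k\ge 2$, the exponent $dg$ cannot vanish, so $g\ge 1$ and hence $dg\ge d$. Combining this with the elementary bound $\varphi(d)<d$, valid for every $d\ge 2$, yields $dg\ge d>\varphi(d)\ge dg$ as soon as $d\ge 2$, a contradiction. Therefore $d=1$, and then $dg\le\varphi(1)=1$ forces $g=1$, so that $r(x)=\Phi_k(x)$ and $t(x)=x+1$.

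In this reduced situation the triple is exactly of the shape treated in Proposition \ref{result deg t=1} (embedding degree $k\in\{3,4,6\}$, $r(x)=\Phi_k(x)$, $\deg t=1$), and that proposition shows $\rho\neq 1$ by forcing $q(x)$ to be a constant multiple of the perfect square $(x+1)^2=t(x)^2$, contradicting the irreducibility of $q(x)$. Invoking it completes the proof; alternatively one can simply repeat the short coefficient comparison from that proof, since here $t=x+1$ is a single explicit case.

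I do not expect a genuine obstacle here: the whole argument is degree bookkeeping that collapses the problem onto the already-proved $d=1$ case. The only points needing care are the justification that $g\ge 1$ (ruling out the degenerate choice $t\equiv 2$) and the observation that, when $d=1$, the present Section~5 hypotheses specialize precisely to those of Proposition \ref{result deg t=1}, so that its conclusion may be quoted verbatim.
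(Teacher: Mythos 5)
Your proof is correct and takes essentially the same route as the paper: dispose of $d\ge 2$ by a degree count and reduce the case $d=1$ to Proposition \ref{result deg t=1}. In fact your execution is cleaner than the paper's own proof, which writes the chain $2\varphi(k)=\varphi(dk)\ge 2dg\ge 2d$, extracts $d=2$, $g=1$, $k=3$, and only then contradicts $dg<\varphi(dk)$; the asserted equality $\varphi(dk)=2\varphi(k)$ there is unjustified in general (it would force $\varphi(d)=2$), whereas your identity $\varphi(dk)=\varphi(d)\varphi(k)=2\varphi(d)$, valid since $\gcd(d,k)=1$ and $\varphi(k)=2$, combined with $dg\le\varphi(d)$ and $\varphi(d)<d\le dg$ for $d\ge 2$, yields the contradiction immediately and correctly.
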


\begin{proof}
We may remove the case $d=1$ since it is already shown in Lemma \ref{result deg t=1}.
If $d \ge 2$ and $\rho =1$, then 
$2 \varphi(k)=\varphi(dk) \ge 2dg \ge 2d$.
Hence $d=2$ and $g=1$, so that $k=3$ since $\gcd(d,k)=1$.
This contradicts $dg<\varphi(dk)$.
\end{proof}

Next, we show the claim in Theorem \ref{main thm 2} (ii):

\begin{prop}
If $k$ has a square-factor, then 
$\rho \neq 1$.
\end{prop}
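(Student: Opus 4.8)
The plan is to reduce the whole question to the single assertion $y(x)\in\Q[x^{p^{s-1}}]$ and then contradict it. By part (i) I may assume $\gcd(d,k)=1$ and $d$ square-free. Since $k$ has a square factor, fix a prime $p$ with $p^2\mid k$ and write $k=p^sk'$ with $s\ge 2$ and $p\nmid k'$. From $\gcd(d,k)=\gcd(g,k)=1$ and $p\mid k$ I get $p\nmid dg$, hence $p^{s-1}\nmid dg$. Writing $\ell:=p^{s-1}$ and using Lemma \ref{k divided by p-power} to replace $r(x)=\Phi_{dk}(x)=\Phi_{dpk'}(x^{\ell})$, the CM-equation becomes
\[
Dy(x)^2=4h\,\Phi_{dpk'}(x^{\ell})-(x^{dg}-1)^2 ,\qquad \ell\nmid dg .
\]
The point is that if I can show $y(x)\in\Q[x^{\ell}]$, then $(x^{dg}-1)^2=4h\,\Phi_{dpk'}(x^{\ell})-Dy(x)^2$ also lies in $\Q[x^{\ell}]$; but its middle term $-2x^{dg}$ has exponent not divisible by $\ell$, a contradiction. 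So everything reduces to proving $y\in\Q[x^{\ell}]$.

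To prove $y\in\Q[x^\ell]$ I would invoke Lemma \ref{p-power} with $a=\ell$ and $m=\deg y=\varphi(dk)/2$. The term $4h\,\Phi_{dpk'}(x^\ell)$ is automatically in $\Q[x^\ell]$, so the only terms of $Dy^2$ that can spoil the hypothesis that the degree-$\ge m$ part lie in $\Q[x^\ell]$ are the top two terms of $(x^{dg}-1)^2$, namely $x^{2dg}$ and $-2x^{dg}$. When $p$ is odd one checks $\ell\mid m$ (because $p-1$ is even), which forces $\deg y=m$ and rules out $dg=m$; consequently the degree-$dg$ term sits below level $m$, and the sole possible obstruction is the term $x^{2dg}$ when $2dg\ge m$. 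If $2dg<m$, Lemma \ref{p-power} applies immediately and we are done.

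The hard part is the band $m/2\le dg<m$, where $x^{2dg}$ genuinely reaches into degrees $\ge m$ and the naive application of Lemma \ref{p-power} fails; resolving it is where I expect the real work. Here I would use that $y^2$ is an honest square. The invariance of $\Phi_{dpk'}(x^{\ell})$ under $x\mapsto\zeta x$ for every $\ell$-th root of unity $\zeta$ shows that the support of $y^2$ is confined to $\ell\Z\cup\{dg,2dg\}$, i.e.\ to the residues $0,\delta,2\delta$ modulo $\ell$, where $\delta\equiv dg$ and $\gcd(\delta,\ell)=1$. Letting $R$ be the set of residues occurring in $y$, one has $R+R\subseteq\{0,\delta,2\delta\}$ and $0\in R$ (as $\ell\mid m$); for $\ell>3$ this forces $R=\{0\}$ or $R=\{0,\delta\}$. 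In the second case, the residue-$2\delta$ part of $y^2$ is supported only at $x^{2dg}$, so the residue-$\delta$ block of $y$ is a single monomial $bx^{dg}$; then the residue-$\delta$ part of $y^2$ is supported only at $x^{dg}$, forcing the residue-$0$ block to be constant. Thus $y=c+bx^{dg}$ has degree $dg<m$, contradicting $\deg y=m$. Hence $R=\{0\}$, i.e.\ $y\in\Q[x^\ell]$, which finishes the argument. The collapsed small cases $\ell=3$ (that is, $p=3,\ s=2$) and $p=2$, where the three residues no longer stay distinct, I would dispose of by direct computation. Overall the crux is this support-and-squareness analysis; the degree bookkeeping of Lemma \ref{p-power} alone does not reach the band $m/2\le dg<m$.
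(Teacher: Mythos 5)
Your global plan --- reduce everything to showing $y(x)\in\Q[x^{\ell}]$, read off the contradiction from the middle term $-2x^{dg}$ of $(x^{dg}-1)^2$, and settle the range $2dg<m$ by Lemma \ref{p-power} --- is exactly the paper's, and your bookkeeping ($\ell\nmid dg$, $\ell\mid m$ for odd $p$, hence $dg<m$ and $\deg y=m$) is correct. The gap is the one step that carries all the weight in the band $m\le 2dg<2m$: from $\mathrm{supp}(y^2)\subseteq \ell\Z\cup\{dg,2dg\}$ you infer $R+R\subseteq\{0,\delta,2\delta\}$ modulo $\ell$, where $R$ is the set of residues of $\mathrm{supp}(y)$. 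That inference is false in general: the residue-$\sigma$ component of $y^2$ is $\sum_{\rho_1+\rho_2\equiv\sigma}y_{\rho_1}y_{\rho_2}$, and distinct pairs of blocks can cancel, so a residue in $R+R$ need not survive into $\mathrm{supp}(y^2)$. Concretely, $y=x^2+2x-2$ has $y^2=x^4+4x^3-8x+4$; modulo $5$ the residue $2=1+1=0+2$ lies in $R+R$ but does not occur in $\mathrm{supp}(y^2)$. Since the whole content of the proposition is to rule out exactly such hidden cancellations in $Dy(x)^2=4h\Phi_{dpk'}(x^{\ell})-(x^{dg}-1)^2$, you cannot assume they do not happen. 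What an honest extremal-coefficient argument (downward induction from degree $2m$, anchored at $y_m\neq0$) actually yields is much weaker: only $y_i=0$ for $2dg-m+1\le i<m$ with $\ell\nmid i$ --- precisely the paper's partial vanishing statement. To cover the remaining low indices the paper must run a second induction upward from the constant term, and this forces a case split on $y_0\neq0$ versus $y_0=0$; the latter case is not cosmetic, since there one first derives $4h=1$, then $Dy_m^2=1$, hence $D=1$, hence $\sqrt{-1}\in\Q(\mu_{dk})$ and parity constraints on the square part of $k$, before the induction can be restarted. Your support argument bypasses none of this; once the $R+R$ claim is removed, your subsequent (otherwise correct) analysis of the residue-$\delta$ and residue-$2\delta$ blocks has nothing to stand on.

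The two cases you set aside are also not small. The case $p=2$ comprises every $k$ divisible by $4$, and $\ell=3$ comprises every $k$ with $9\parallel k$; for $k=36$ both of your options collapse, so your argument says nothing about it at all. Each of these is an infinite family in $(k,d,g,D)$, so ``direct computation'' is not available, and for $p=2$ the structure genuinely changes (e.g.\ $\ell\mid m$ can fail, so even your anchor $0\in R$ is lost). The paper handles these cases by different means: when the square part of $k$ is exactly $4$, the obstruction $x^{2dg}$ has even exponent, so Lemma \ref{p-power} applies at once and the contradiction comes from $2x^{dg}$ with $dg$ odd (with $k=4$ itself reduced to the $k=3,4,6$ proposition); when the square part is larger, it is Lemma \ref{claim} together with the $y_0=0$ analysis sketched above, in which the evenness of the square part is exactly what gets used.
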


Assume that $\rho =1$.
Denote
$k=a^2k'$, $a \ge 2$.
Put 
$m:=\frac{\varphi(dk)}{2}=a\frac{\varphi(d)\varphi(ak')}{2}$,
then $2dg \le 2m$ since $2m=\deg r$.
Then
\begin{eqnarray}\label{3'''}
Dy(x)^2
=
4h \Phi_{dak'}(x^a)-(x^{dg}-1)^2.
\end{eqnarray}
We have the following:

\begin{lem}\label{claim}
%\begin{cases}
If $a \neq 2$, then
%,\ $d \neq 2$, \ 
%\\
$m< 2dg <2m$.
%\end{cases}
%Especially, $4 \mid \varphi(d)\varphi(ak')$, $\frac{m}{2} \in \Z$ and $a \le \frac{m}{2}<d$.
\end{lem}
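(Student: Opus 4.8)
The plan is to exploit the fact that, after rewriting $r(x)$ as a polynomial in $x^a$, the whole identity (\ref{3'''}) lives almost entirely inside $\Q[x^a]$, the only obstructions being the two monomials $-x^{2dg}$ and $2x^{dg}$ coming from $-(x^{dg}-1)^2$. The arithmetic input driving everything is the coprimality $\gcd(dg,a)=1$: since $a^2\mid k$ and $\gcd(g,k)=\gcd(d,k)=1$, both $g$ and $d$ are prime to $a$, so $a\nmid dg$, and for $a\ge 3$ one also gets $a\nmid 2dg$ (because $a\mid 2dg$ together with $\gcd(dg,a)=1$ would force $a\mid 2$). This is exactly where the hypothesis $a\neq 2$ is used: for $a=2$ the exponent $2dg$ is automatically a multiple of $a$ and the argument below breaks down.

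First I would record the standing facts. By Lemma \ref{k divided by p-power} we have $r(x)=\Phi_{dk}(x)=\Phi_{dak'}(x^a)$, so $\deg r=2m=a\varphi(dak')$ is a multiple of $a$ and the term $4h\Phi_{dak'}(x^a)$ in (\ref{3'''}) lies in $\Q[x^a]$. Since $\rho=1$ gives $\max\{\deg y,\deg t\}=m$ with $\deg t=dg$, we have $dg\le m$, hence $2dg\le 2m$. For the upper bound I would then simply observe that $2m$ is a multiple of $a$ while $a\nmid 2dg$, so $2dg\neq 2m$; combined with $2dg\le 2m$ this gives $2dg<2m$, and in particular $dg<m$, which forces $\deg y=m$.

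For the lower bound $m<2dg$, suppose to the contrary that $2dg\le m$. I would first rule out the boundary case $2dg=m$ by a parity/coprimality count. If $a$ is odd, then $\gcd(m,a)=\gcd(2dg,a)=1$, which is incompatible with $a\mid 2m$ and $a\ge 3$. If $a$ is even (so $a\ge 4$), then $\varphi(dak')$ is even and $2dg=m=\tfrac a2\varphi(dak')$ would make $dg$ a multiple of $\tfrac a2\ge 2$, again contradicting $\gcd(dg,a)=1$. Hence $2dg<m$ strictly. Now in (\ref{3'''}) the polynomial $(x^{dg}-1)^2$ has degree $2dg<m=\deg y$, so every term of $Dy(x)^2$ of degree $\ge m$ comes from $4h\Phi_{dak'}(x^a)\in\Q[x^a]$; applying Lemma \ref{p-power} to $y$ yields $y(x)\in\Q[x^a]$. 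But then $(x^{dg}-1)^2=4h\Phi_{dak'}(x^a)-Dy(x)^2\in\Q[x^a]$, which is impossible since its term $-2x^{dg}$ has exponent $dg$ with $a\nmid dg$. This contradiction gives $m<2dg$, completing the proof.

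The main obstacle is precisely the boundary case $2dg=m$: there the hypothesis of Lemma \ref{p-power} fails, because the monomial $-x^{2dg}=-x^m$ is a degree-$m$ term not lying in $\Q[x^a]$, so one cannot conclude $y\in\Q[x^a]$ and the case must be excluded by hand. Doing this cleanly is where the even/odd dichotomy on $a$ and the coprimality $\gcd(dg,a)=1$ are genuinely needed; everything else reduces to routine comparisons of degrees and of exponent residues modulo $a$.
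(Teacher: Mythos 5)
Your proof is correct and follows essentially the same route as the paper's: the decisive step ruling out $2dg<m$ is the identical application of Lemma \ref{p-power} to (\ref{3'''}) together with $a\nmid dg$, and the strict inequalities come from the same divisibility-by-$a$ considerations on the degrees. The only (harmless) difference is bookkeeping at the boundary cases: the paper first proves $\varphi(d)\varphi(ak')/2\in\Z$ (hence $a\mid m$) by invoking $\rho=1$ to exclude $k=4$, and even re-derives $a\neq 2$ by contradiction, whereas you use only $a\mid 2m$ plus an odd/even split on $a$, taking the hypothesis $a\neq 2$ at face value.
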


\begin{proof}
First, we show that
$\frac{\varphi(d)\varphi(ak')}{2} \in \Z$.
If $d=1$ and $\frac{\varphi(d)\varphi(ak')}{2} \notin \Z$,
then $a=2$, $k'=1$.
Hence $r(x)=\Phi_4(x)$ and $k=4$.
This contradicts our assumption $\rho=1$.
Hence we may suppose that $d \ge 2$.
In this case, we obtain $a \neq 2$ or $d \neq 2$ since $\gcd(d,a)=1$.
Thus $\frac{\varphi(d)\varphi(ak')}{2} \in \Z$.

Now assume that $dg=m$.
Then the above fact induces $a \mid dg$ 
which contradicts $\gcd(dg,a)$.
Therefore we obtain $dg<m$.

Second, to induce a contradiction, we assume that $a=2$.
Then
\[
Dy(x)^2
=
4h\Phi_{2dk'}(x^2)-(x^2)^{dg}+2x^{dg}-1
\]
and also $dg<m$, $a \mid 2dg$.
Hence, applying Lemma \ref{p-power} with $a=2$, we obtain
$y(x) \in \Q[x^2]$.
However, the term $x^{dg}$ in right-hand side of the above equation is not $0$.
Since $\gcd(dg,a)=1$ implies that $dg$ is odd, this is a contradiction.
So that we have $a \neq 2$.

Third, if $2dg < m$, then we obtain a contradiction from applying Lemma \ref{p-power} to (\ref{3'''}) and the fact $a \nmid dg$, in the same way.
Hence $m \le 2dg$.
Moreover, we see that
\[
2dg \neq m=a\varphi(d)\frac{\varphi(ak')}{2}.
\]
In fact, if it does not hold, $a \mid 2dg$
(note that $\frac{\varphi(ak')}{2} \in \Z$, since we already proved $a \neq 2$).
Since $\gcd(dg,a)=1$, this induces $a=2$ which is a contradiction.
Therefore we obtain $dg<m<2dg$.
This completes the proof.
\end{proof}

As in the same way as in the proof of Lemma \ref{p-power},
we compare the coefficients of the both sides of (\ref{3'''}), inductively.
By Lemma \ref{claim}, we have $2m>2dg$ and so that $\deg y=m$.
Denote
$y(x)=y_mx^m+\cdots+y_0$, $y_i \in \Q$, $y_m \neq 0$.
Again as in the same way as in the proof of Lemma \ref{p-power}, compare the both sides 
from the leading term to the term $x^{2d+1}$, inductively.
Then we have
\begin{eqnarray}\label{2d-m<i<m}
y_i=0,\ \ 2dg-m+1 \le i < m,\ a \nmid i
\end{eqnarray}
(note that $m<2dg$ by Lemma \ref{claim}).

Now suppose that $y_0 \neq 0$. 
Comparing the terms $X$ in (\ref{3'''}), we obtain 
$Dy_0y_1=0$,
and so that $y_1=0$.
Compare the both sides again from the constant term up to the term $X^{dg-1}$, inductively.
Then 
%\begin{eqnarray*}\label{0<i<d}
$y_i=0$, $0 < i \le dg-1,\ a \nmid i$.
%\end{eqnarray*}
Combining (\ref{2d-m<i<m}) with this, we have $y(x) \in \Q[x^a]$ since 
$dg \le m-1$.
Therefore a contradiction is induced from the terms $x^{dg}$ in both sides of (\ref{3'''}).

On the other hand, suppose that $y_0 = 0$.
Then, by (\ref{3'''}), we obtain $4h=1$.
Combining it with $\deg r=2m>2dg$, we have $Dy_m^2=1$ and $D=1$.
This implies that $\sqrt{-1} \in \Q(\mu_{dg})$, and so that $4 \mid dk$.
If $2 \mid d$, then $2 \mid k$ since $d$ is square-free.
This contradicts with $\gcd(d,k)=1$.
Hence $4 \mid a^2k'$ and, moreover, we see that $a$ is even since $k'$ is square-free.
Therefore, $dg$ is odd.
Finally, by comparing the coefficients of $x^2,x^4,\ldots,x^{2dg-2}$ of (\ref{3'''}) in ascending order of powers, we obtain 
\begin{eqnarray*}
y_i=0\ \ (0 < i \le dg-1,\ a \nmid i).
\end{eqnarray*}
Hence, as in the same way as above, a contradiction is induced.

\vspace{0.3cm}

It seems to that the most interesting case for cryptography is the case where the embedding degree $k$ consists of powers of $2$ or $3$.
So that, it is one of the important problems to give the explicit formulas of $\rho$-values in the case where Theorem \ref{main thm 2} (ii) holds.
Also, to give bounds of $\rho$-values in many other cases is needed for applications.

%%%%%%%%%%%%%%%%%%%%%%%%%%%%%%%%%%%%%%%%%

\vspace{0.3cm}
{\bfseries Acknowledgement}. The author would like to express his gratitude for
a number of helpful suggestions to Naoki Kanayama.

%%%%%%%%%%%%%%%%%%%%%%%%%%%%%%%%%%%%%%%%%

%{\scriptsize 
{\footnotesize

}

\vspace{4mm}
\noindent 
Keiji Okano\\
Department of Mathematics, \\
Faculty of Science and Technology, \\
Tokyo University of Science \\
2641 Yamazaki, Noda, Chiba, 278-8510 Japan
%
%Department of Mathematical Sciences \\
%School of Science and Engineering \\
%Waseda University \\
%Okubo, Shinjuku-ku \\
%Tokyo 169-8555 \\
%Japan \\
%
\\
{\it E-mail address}: {\tt okano\_keiji@ma.noda.tus.ac.jp}, 
%\\
%\hspace{81pt} {\tt okano@suou.waseda.jp}


\begin{thebibliography}{90}
\bibitem{AM}
A.O.L. Atkin, F. Morain, 
\textit{Elliptic curves and primality proving,}
Math. Comput. {\bfseries 61}, 29--68 (1993).


\bibitem{BN}
P.S.L.M. Barreto, M. Naehrig, 
\textit{Pairing-friendly elliptic curves of prime order, }
in Selected Areas in Cryptography-SAC 2005. Lecture Notes in Computer Science, vol. 3897 (Springer, Berlin, 2006), 319--331.


\bibitem{BF}
D. Boneh, M. Franklin, 
\textit{Identity-based encryption from the Weil pairing,} 
in Advances in Cryptology-Crypto 2001. Lecture Notes in Computer Science, vol. 2139 (Springer, Berlin, 2001), 213--229. Full version: SIAM J. Comput. {\bfseries 32}, 586--615 (2003).

\bibitem{BRS}
D. Boneh, K. Rubin, A. Silverberg,
\textit{Finding composite order ordinary elliptic curves using the Cocks-Pinch method,}
Journal of Number Theory, {\bfseries 131}, 832--841 (2011).

\bibitem{BS}
A.I. Borevich, I.R. Shafarevich,
\textit{Number theory,}
Academic Press (New York-London, 1966)

\bibitem{BW}
F. Brezing, A. Weng, 
\textit{Elliptic curves suitable for pairing based cryptography,}
Des. Codes Cryptogr. {\bfseries 37}, 133--141 (2005).

\bibitem{Free}
D. Freeman, 
\textit{Constructing pairing-friendly elliptic curves with embedding degree $10$,} 
in Algorithmic Number Theory Symposium-ANTS-VII. 
Lecture Notes in Computer Science, {\bfseries 4076} 
(Springer, Berlin, 2006), 452--465.


\bibitem{FST}
D. Freeman, M. Scott, E. Teske, 
\textit{A Taxonomy of Pairing-Friendly Elliptic Curves,} 
Journal of Cryptology {\bfseries 23} (2010), 224--280.


\bibitem{Kob}
N. Koblitz,
\textit{Elliptic curve cryptosystems,}
Mathematics of Computation, {\bfseries 48} (1987), 203--209.

\bibitem{Lang}
S. Lang, 
\textit{Algebra, }
revised 3rd edition (Springer, Berlin, 2002).

\bibitem{MNT}
A. Miyaji, M. Nakabayashi, S. Takano, 
\textit{New explicit conditions of elliptic curve traces for FR-reduction,}
IEICE Trans. Fundam. {\bfseries E84-A}, 1234--1243 (2001).


\bibitem{SOK00}
R. Sakai, K. Ohgishi, M. Kasahara,
\textit{Cryptosystems based on pairings,}
in Symposium on Cryptography and Information Security 2000,
SCIS 2000, Okinawa, Japan (2000).


\bibitem{Sut}
A. Sutherland,
\textit{Computing Hilbert class polynomials with the Chinese Remainder Theorem,}
Math. Comp. {\bfseries 80} (2011), 501--538. 



\end{thebibliography}
\end{document}